\def\({\left(\begin{array}{cccccc}}
\def\){\end{array}\right)}
\def\({\left(\begin{array}{cccccc}}
\def\){\end{array}\right)}
\def\bes{\begin{eqnarray}}
\def\ees{\end{eqnarray}}
\def\bel{\begin{equation}\label}
\newcommand{\beq}{\begin{equation}}
\newcommand{\eeq}{\end{equation}}
\newcommand{\bea}{\begin{eqnarray}}
\newcommand{\eea}{\end{eqnarray}}
\newcommand{\beann}{\begin{eqnarray*}}
\newcommand{\eeann}{\end{eqnarray*}}
\newcommand{\RR}{\mathbb{R}}
\newcommand{\R}{\ensuremath{\mathbf{R}}}
\newcommand{\NN}{\mathbb{N}}
\newcommand{\Sbv}{\mathcal{S}_{BV,\varepsilon}}
\newcommand{\eps}{\varepsilon}
\newcommand{\Sweak}{\mathcal{S}_{weak}}
\newcommand{\SweakT}{\mathcal{S}^T_{weak}}
\newcommand{\Sl}{\mathcal{S}^T_{reg}}
\newcommand{\Nuo}{\mathcal{V}_0}
\newcommand{\Nu}{\mathcal{V}}
\newcommand{\ds}{\displaystyle}
\newcommand{\bp}{\begin{proof}}
\newcommand{\ep}{\end{proof}}
\newtheorem{theorem}{Theorem}[section]
\newtheorem{proposition}[theorem]{Proposition}
\newtheorem{lemma}[theorem]{Lemma}
\newtheorem{definition}[theorem]{Definition}
\newtheorem{assumption}[theorem]{Assumption}
\newtheorem{condition}[theorem]{Condition}
\newtheorem{remark}[theorem]{Remark}
\numberwithin{equation}{section}
\begin{document}

\title[weak-BV stability]{Uniqueness and weak-BV stability  for $2\times 2$ conservation laws}

\author[Chen]{Geng Chen}
\address[Geng Chen]{\newline Department of Mathematics, \newline University of Kansas, Lawrence, KS, 66049, USA}
\email{gengchen@ku.edu}

\author[Krupa]{Sam G.  Krupa}
\address[Sam G. Krupa]{\newline Max Planck Institute for Mathematics in the Sciences, \newline Inselstr. 22, 04103 Leipzig, Germany}
\email{Sam.Krupa@mis.mpg.de}

\author[Vasseur]{Alexis F. Vasseur}
\address[Alexis F. Vasseur]{\newline Department of Mathematics, \newline The University of Texas at Austin, Austin, TX 78712, USA}
\email{vasseur@math.utexas.edu}


\date{\today}
\subjclass[2010]{Primary 35L65; Secondary 76N15, 35L45, 35B35.}
\keywords{Compressible Euler system,  Uniqueness, Stability,  Relative entropy, Conservation law}

\thanks{\textbf{Acknowledgment.} 
G. Chen is partially supported by NSF with grants DMS-1715012 and DMS-2008504.  S. G. Krupa was partially supported by NSF-DMS Grant 1840314. A. F. Vasseur is partially supported by the NSF grant: DMS 1614918.  
}

\begin{abstract}
Let a  1-d system of hyperbolic conservation laws, with two unknowns, be endowed with a convex entropy. We consider the family of small $BV$ functions which are global solutions of this equation. 
For any small $BV$ initial data, such global solutions are known to exist. Moreover, they are known to be unique among $BV$ solutions verifying either the so-called Tame Oscillation Condition, or the  Bounded Variation Condition on space-like curves. 
 In this paper, we show that these solutions are stable in a larger class of weak (and possibly not even $BV$) solutions of the system. This result extends the classical weak-strong uniqueness results which allow comparison to a smooth solution. Indeed our result extends these results to a weak-$BV$ uniqueness result, where only one of the solutions is supposed to be small $BV$, and the other solution can come from a large class.
 As a consequence of our result, the Tame Oscillation Condition, and the Bounded Variation Condition on space-like curves are not necessary for the  uniqueness of solutions in the $BV$ theory, in the case of systems with 2 unknowns. The method is $L^2$ based. It builds up from  the theory of a-contraction with shifts, where  suitable weight functions $a$ are generated via the front tracking method.
\end{abstract}

\maketitle
\tableofcontents



\section{Introduction}
We consider 1-d system of hyperbolic conservation laws with two unknowns
\beq\label{cl}
u_t+(f(u))_x=0  \qquad t>0, \ x\in \RR,
\eeq
where $(t,x)\in {\mathbb R}^+\times \mathbb R$ are time and space, and $u=(u_1,u_2)\in \Nuo\subseteq \RR^2$ is the unknown. The set of states $\Nuo$ is supposed to be bounded, and we denote $\Nu$ its interior. Then $f=(f_1,f_2)\in [C(\Nuo)]^2\cap [C^4(\Nu)]^2$ is the flux function, and is assumed to be continuous on $\Nuo$ and $C^4$ on $\Nu$.   For any $g\in C^1(\Nu)$, let us  denote  the vector valued function $g'=Dg$.  Then,
we denote the eigenvalues and associated right eigenvectors of $f'$ on $\Nu$ as $\lambda_1$, $r_1$ and $\lambda_2$, $r_2$,  corresponding to the 1 and 2 characteristic families respectively. 
Through the paper, we will make the following general assumptions on the system.
\begin{assumption} Assumptions on the system\\    \label{assum}
\begin{itemize}
\item[(a)] For any $u\in \Nu$:  $\lambda_1(u)<\lambda_2(u)$.
\vskip0.1cm
\item[(b)]  For any $u\in\Nu$, and $i=1,2$:  $\lambda'_i(u)\cdot r_i(u)\neq0$.
\vskip0.1cm
\item[(c)] There exists a strictly convex  function $\eta\in C(\Nuo)\cap C^3(\Nu)$ and a function $q\in C(\Nuo)\cap C^3(\Nu)$ such that
\begin{equation}\label{eq:entropy}
q'=\eta'f', \qquad \mathrm{on} \ \ \Nu.
\end{equation}
\vskip0.1cm
\item[(d)] For any $ b\in \Nu$, and any left eigenvector  $\ell$ of $ f'(b)$:  the function $u\to \ell\cdot f(u)$ is either convex or concave on $\Nu$.
\item[(e)] There exists $L>0$ such that for any $u\in \Nu$ and $i=1,2$: $|\lambda_i(u)|\leq L$.
\item[(f)] For $u_L\in \Nu$, we denote $s\to  S^1_{u_L}(s)$ the $1$-shock  curve through $u_L$ defined for $s>0$. We choose the parametrization such that $s=|u_L-S^1_{u_L}(s)|$. Therefore, $(u_L, S^1_{u_L}(s), \sigma^1_{u_L}(s))$ is the $1$-shock with left hand  state $u_L$ and strength $s$. Similarly, we define $s\to S^2_{u_R}$ to be the $2$-shock curve such that $(S^2_{u_R}, u_R, \sigma^2_{u_R}(s))$ is the $2$-shock with right hand state $u_R$ and strength $s$. We assume that these curves are defined globally in $\Nu$ for every $u_L\in \Nu$ and $u_R\in \Nu$.
\item[(g)] (for 1-shocks) If $(u_L,u_R)$ is an entropic Rankine-Hugoniot discontinuity with shock speed $\sigma$, then $\sigma>\lambda_1(u_R).$
\item[(h)] (for 1-shocks) If $(u_L,u_R)$ (with $u_L\in B_{\epsilon}(d)$) is an entropic Rankine-Hugoniot discontinuity with shock speed $\sigma$ verifying,
\begin{align*}
\sigma\leq \lambda_1(u_L),
\end{align*}
then $u_R$ is in the image of $S^1_{u_L}$. That is, there exists $s_{u_R}\in[0,s_{u_L})$ such that $S^1_{u_L}(s_{u_R})=u_R$ (and hence $\sigma=\sigma^1_{u_L}(s_{u_R})$).
\item[(i)] (for 2-shocks) If $(u_L,u_R)$ is an entropic Rankine-Hugoniot discontinuity with shock speed $\sigma$, then $\sigma<\lambda_2(u_L).$
\item[(j)] (for 2-shocks) If $(u_L,u_R)$ (with $u_R\in B_{\epsilon}(d)$) is an entropic Rankine-Hugoniot discontinuity with shock speed $\sigma$ verifying,
\begin{align*}
\sigma\geq \lambda_n(u_R),
\end{align*}
then $u_L$ is in the image of $S^2_{u_R}$. That is, there exists $s_{u_L}\in[0,s_{u_R})$ such that $S^2_{u_R}(s_{u_L})=u_L$ (and hence $\sigma=\sigma^2_{u_R}(s_{u_L})$).
\item[(k)]  For $u_L\in \Nu$, and  for all $s>0$,  $\ds{\frac{d}{ds}\eta(u_L | S^1_{u_L}(s))}>0$ (the shock ``strengthens" with $s$).
Similarly, for $u_R\in \Nu$, and for all $s>0$, $\ds{\frac{d}{ds}\eta(u_R | S^2_{u_R}(s))}>0$. Moreover, for each $u_L,u_R\in \Nu$ and $s > 0$, $\frac{d}{ds}\sigma^1_{u_L}(s) < 0$ and $\frac{d}{ds}\sigma^2_{u_R}(s) > 0$.
\end{itemize}
\end{assumption} 
These assumptions are fairly general. The first one corresponds to the strict hyperbolicity of the system in $\Nu$. The second one means that both characteristics families of the system  are genuinely nonlinear  in $\Nu$ in the sense of Lax \cite{MR0093653}. The third assumption is related to the second law of thermodynamics. The function 
$\eta$ is called an entropy of the system, and $q$ is called the entropy flux associated with $\eta$. The next two assumptions are less classical. Assumption (d) ensures a contraction property on rarefaction waves (see Section \ref{sec:onewave}). Assumption (e) provides a global bound on the speeds of propagation. Assumptions (f) to (k) are now standard for the $a$-contraction theory. It was showed in \cite{Leger2011} that they are verified for a large family of systems, including the Full Euler system and the isentropic Euler system.
\vskip0.1cm
A typical  example of systems verifying the Assumptions \ref{assum} is the  system of isentropic Euler equations  for  $\gamma>1$:
\begin{equation}\label{eq:Euler}
\begin{array}{l}
\rho_t+(\rho v)_x=0, \qquad t>0, x\in \RR,\\
(\rho v)_t+(\rho v^2+\rho^\gamma)_x=0, \qquad t>0, x\in\RR,
\end{array}
\end{equation}
endowed with the physical entropy $\eta(u)=\rho v^2/2+\rho^{\gamma}/(\gamma-1)$, where $u=(\rho,\rho v)$. For any fixed constant $C>0$, we can define the space of states as the  invariant region:
\begin{equation}\label{InvReg}
\Nuo=\{ u=(\rho, \rho v)\in \RR^+\times\RR: \ \ -C< w_1(u)=v-c_1\rho^\frac{\gamma-1}{2} \leq  w_2(u)=v+c_1\rho^\frac{\gamma-1}{2}<C \}.
\end{equation}
Note that $\Nuo=\Nu \cup \{(0,0)\}$ where $(0,0)$ is the vacuum state. It justifies the  precise distinction  of $\Nu$ and $\Nuo$  (see  \cite{VASSEUR2008323, Leger2011}).  
The fact that \eqref{eq:Euler} verifies the three first assumptions of Assumption \ref{assum} is well-known (see Serre \cite{Serre_Book} for instance). We prove in Section \ref{sec:onewave} that it also verifies assumptions (d) and (e).

\vskip0.3cm
We will consider only entropic solutions of (\ref{cl}), that is, solutions which verify additionally 
\begin{equation}\label{ineq:entropy}
(\eta(u))_t+(q(u))_x\leq 0, \qquad t>0, \ x\in \RR.
\end{equation}

More precisely, we ask that for all $\phi\in C^\infty_0([0,\infty)\times\mathbb{R})$ verifying $\phi\geq 0$,
\begin{align}\label{entropy_ineq_integral_form}
\int\limits_0^\infty\int\limits_{-\infty}^\infty \Big[ \phi_t(t,x) \eta(u(t,x))+\phi_x(t,x) q(u(t,x)) \Big]\,dxdt+\int\limits_{-\infty}^\infty \phi(0,x) \eta(u^0(x))\,dx \geq 0,
\end{align}
where $u^0:\mathbb{R}\to\mathbb{R}$ is the prescribed initial data for the solution $u$. 
\vskip0.1cm
We also restrict our study to the solutions verifying the so-called Strong Trace Property.
\begin{definition}[Strong Trace Property]\label{defi_trace}
Let $u\in L^\infty(\RR^+\times\RR)$. We say that $u$ verifies the strong trace property if for any Lipschitzian curve $t\to X(t)$, there exists two bounded functions $u_-,u_+\in L^\infty(\RR^+)$ such that for any $T>0$
$$
\lim_{n\to\infty}\int_0^T\sup_{y\in(0,1/n)}|u(t,X(t)+y)-u_+(t)|\,dt=\lim_{n\to\infty}\int_0^T\sup_{y\in(-1/n,0)}|u(t,X(t)+y)-u_-(t)|\,dt=0.
$$
\end{definition}
For convenience, we will use  later the notation $u_+(t)=u(t, X(t)+)$, and $u_-(t)=u(t, X(t)-)$.
We can then define  the wildest space of solutions that we consider in the paper:
\begin{equation}\label{eq:weaksol}
\Sweak=\{u\in L^\infty(\RR^+\times \RR:\Nuo) \ \ \mathrm{weak \ solution \ to } \ \eqref{cl}\eqref{ineq:entropy}, \ \ \text{verifying \Cref{defi_trace}}  \}.
\end{equation}
Note that this space has no smallness condition. 
\vskip0.3cm
The aim of this paper is to show the stability  of a smaller class of solutions -namely solutions with small BV norms- when perturbations are taken in the wider space  $\Sweak$. More precisely, for any domain $\mathcal{O}$ such that $\overline{\mathcal{O}}\subset\Nu$, consider the following class of solutions:
\begin{equation}\label{eq:bvsol}
\Sbv=\{u\in L^\infty(\RR^+, BV(\RR:\mathcal{O})) \ \ \mathrm{solution \ to } \ \eqref{cl}\eqref{ineq:entropy}, \ \ \mathrm{with} \ \|u(t)\|_{BV(\RR)}\leq \varepsilon \ \mathrm{for} \   t\geq0  \}.
\end{equation}
Our main result is the following theorem. 
\begin{theorem}\label{theo}
Consider a system \eqref{cl}  verifying all the Assumptions \ref{assum}.
Then, for any  open set $\mathcal{O}$    such that  $\overline{\mathcal{O}}\subset\Nu$, there exists $\eps>0$ such that the following is true. 
\vskip0.1cm
Let  $u\in \Sbv$ be a BV solution with 
 initial value $u^0$.  
Assume that $u_n\in \Sweak$  is a sequence of wild solutions, 
 uniformly bounded in $L^\infty(\RR^+\times\RR)$, with initial values $u_n^0\in L^\infty(\RR)$.  If  $u^0_n$ converges to $u^0$ in $L^2(\RR)$, then 
for every $T>0, R>0$, $u_n$ converges to $u$ in $L^\infty(0,T; L^2(-R,R))$. Especially, $u$ is unique in the class $\Sweak$. 
\end{theorem}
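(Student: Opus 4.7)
The plan is to establish a weighted $L^2$ stability estimate via a relative entropy functional
\[
\eta(v\mid w) := \eta(v) - \eta(w) - \eta'(w)(v-w),
\]
using the $a$-contraction with shifts machinery. Since the reference $BV$ solution $u$ is not piecewise smooth, I would first approximate it by front tracking. Given the small-$BV$ bound $\|u(t,\cdot)\|_{BV}\le\varepsilon$, the standard Bressan--Risebro scheme produces piecewise constant approximations $u^\nu$ whose discontinuities are classified as $1$-shocks, $2$-shocks, rarefaction fronts, or non-physical fronts, with Glimm-type bounds on the total front strength and interaction potential uniform in $\nu$, and $u^\nu\to u$ in $L^1_{\mathrm{loc}}$ along a subsequence.

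Next, to each front of $u^\nu$ indexed by $i$ I would attach a Lipschitz shift $t\mapsto X_i^\nu(t)$ defined as the solution of an ODE driven by the one-sided traces of the wild solution $u_n$ at $x=X_i^\nu(t)$ (these traces exist thanks to the Strong Trace Property of \Cref{defi_trace}), together with a piecewise-constant-in-$x$ weight $a^\nu(t,\cdot)$ whose jumps across each shifted front have sign and size dictated by the Leger--Vasseur $a$-contraction prescription, scaling like the strength of the corresponding front. The smallness of $\varepsilon$ keeps $a^\nu\in[\underline a,\overline a]\subset(0,\infty)$ uniformly in $\nu$, despite the arbitrarily many jumps.

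The core of the proof will be to differentiate in $t$ the functional
\[
\Phi^\nu(t) := \int_{\RR} a^\nu(t,x)\,\eta\bigl(u_n(t,x)\,\big|\,\widetilde u^\nu(t,x)\bigr)\,dx,
\]
where $\widetilde u^\nu$ is $u^\nu$ with each front re-centered at its shifted position. Using the entropy inequality \eqref{entropy_ineq_integral_form} for $u_n$ and the Rankine--Hugoniot relations for $u^\nu$, the derivative decomposes into a sum of per-front contributions. The one-shock lemma of $a$-contraction theory (resting on assumptions (f)--(k)) makes each genuine shock contribution nonpositive; assumption (d) yields a favorable sign for rarefaction fronts; non-physical fronts and front interactions produce errors of order $O(\nu)$ plus quadratic interaction terms summable to $O(\varepsilon)\Phi^\nu$ via Glimm's estimate. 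A Gr\"onwall argument then gives
\[
\Phi^\nu(t)\le e^{C_\varepsilon t}\,\Phi^\nu(0)+o_\nu(1),
\]
and sending $\nu\to 0$ the two-sided bound on $a^\nu$ converts this into $\|u_n(t,\cdot)-u(t,\cdot)\|_{L^2(-R,R)}^2\le C(T,R)\,\|u_n^0-u^0\|_{L^2(\RR)}^2$; uniqueness in $\Sweak$ follows by taking $u_n\equiv v$ for any other $v\in\Sweak$ sharing the initial data $u^0$.

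The hardest part will be coupling the front-tracking construction to a genuinely weak (non-$BV$) perturbation $u_n$: the shifts $X_i^\nu$ must be built from merely measurable one-sided traces of $u_n$, distinct shifts may collide as fronts of $u^\nu$ merge, and the weight structure must survive these collisions with a uniform lower bound on $a^\nu$. Equally delicate is the treatment of rarefaction fronts -- which are non-entropic discontinuities of $u^\nu$, where assumption (d) is essential -- and the requirement that the accumulated interaction error be dominated by the small $BV$ norm. It is precisely this coupling that forces $\varepsilon$ to be chosen small and constitutes the main technical novelty beyond the scalar case and the classical weak--strong uniqueness.
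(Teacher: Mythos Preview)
Your overall framework---front tracking approximation, shifts attached to each front driven by the wild solution, a piecewise-constant weight $a^\nu$, and a weighted relative entropy functional---matches the paper's strategy closely. The gap is in the final step.

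You write that ``sending $\nu\to 0$ the two-sided bound on $a^\nu$ converts this into $\|u_n(t,\cdot)-u(t,\cdot)\|_{L^2(-R,R)}^2\le C(T,R)\,\|u_n^0-u^0\|_{L^2(\RR)}^2$.'' This step fails. Your functional $\Phi^\nu$ compares $u_n$ not to $u^\nu$ but to the \emph{shifted} object $\widetilde u^\nu$, whose fronts sit at positions $X_i^\nu(t)$ determined by the traces of $u_n$. There is no mechanism to show $\widetilde u^\nu\to u$ as $\nu\to 0$: the shifts depend on the weak solution $u_n$, are only Lipschitz in time with no quantitative bound on how far they may drift from the original front positions, and this drift worsens as front strengths shrink. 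The paper states this obstruction explicitly: ``it is difficult to control $\|\bar u(t,\cdot)-\psi(t,\cdot)\|_{L^2}$\ldots The idea then is to give up on trying to control $\bar u-\psi$ or the shifts.'' Note also that $\widetilde u^\nu$ is not a solution of \eqref{cl}, since the fronts move at artificial speeds and Rankine--Hugoniot fails; so you cannot invoke those relations as you suggest.

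The paper resolves this by an indirect argument. It proves only that for each $m$ there exists a function $\psi_m$ (your $\widetilde u^\nu$) which is $L^2$-close to $u_m$ \emph{and} satisfies uniform-in-$m$ structural bounds: small $BV$, Lipschitz in $L^1$ in time, and the Bounded Variation Condition on space-like curves. One then extracts a limit $\psi$ of the $\psi_m$; by the $L^2$ closeness, $u_m\to\psi$ as well; since the convergence is strong, $\psi$ is an entropy solution with initial data $u^0$ satisfying the Bounded Variation Condition. At this point the \emph{classical} $L^1$ uniqueness theorem of Bressan--Lewicka is invoked to identify $\psi$ with $u$. The same argument applied to the constant sequence $u_m\equiv u$ shows $u$ is this unique solution. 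This appeal to the existing $L^1$ uniqueness theory, rather than a direct $L^2$ estimate between $u_n$ and $u$, is the missing idea in your proposal.
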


In a celebrated paper \cite{gl},
Glimm showed that for any $\mathcal{O}$ compact subset of $\Nu$, any $a\in \mathcal{O}$, and  $\eps$ small enough, there exists $\eps_{in}>0$ such that if $\|u^0-a\|_{L^\infty(\RR)}\leq \eps_{in}$ and $\|u^0\|_{BV(\RR)}\leq \eps_{in}$, 
 then there exists a solution $u\in \Sbv$ of \eqref{cl} with $u(0,\cdot)=u^0$. After Glimm's method, currently referred to as the Glimm scheme or random choice method, there are two other frameworks which can be used to prove the small BV existence for general hyperbolic conservation laws: the front tracking scheme (see \cite{Bbook, dafermos_big_book}) and the vanishing viscosity method  \cite{MR2150387}.

\vskip0.1cm
Uniqueness of these solutions   was established by Bressan and Goatin \cite{MR1701818} under the Tame Oscillation Condition. It improved an earlier theorem by Bressan and LeFloch \cite{MR1489317}. Uniqueness was also known to  prevail when the Tame Oscillation Condition is replaced by the assumption that the trace of solutions along space-like curves has bounded variation, see Bressan and Lewicka \cite{MR1757395}. We will refer to this condition as the Bounded Variation Condition (see \Cref{tame_def}). One can also find these theories in  \cite{Bbook} or \cite{dafermos_big_book}. 
\vskip0.3cm
These uniqueness theories, which work for general hyperbolic conservation laws with $n$ unknowns, all need some a priori assumption on the solutions, such as Tame Oscillation Condition or Bounded Variation Condition on space-like curves (see \cite{Bbook}).
\vskip0.3cm
Note that  any $BV$ function verifies the Strong Trace Property (\Cref{defi_trace}). Hence, any $BV$ solution to \eqref{cl}\eqref{ineq:entropy} belongs to  $\Sweak$. Therefore a consequence of \Cref{theo}, is that  
in the case of 2 unknowns, these a priori assumptions are not needed to obtain the uniqueness result. We formulate this result in the following theorem.
\begin{theorem}\label{cor}
Consider a system \eqref{cl}  verifying all the Assumptions \ref{assum}.
Then, for any  open set $\mathcal{O}$    such that  $\overline{\mathcal{O}}\subset\Nu$, there exists $\eps>0$ such that the following is true. 
\vskip0.1cm
Any solution in $\Sbv$ with initial value $u^0$ is unique among  the functions 
\begin{align*}
\{v \in L^\infty([0,T];BV(\RR)) \mathrm{ \ for \ all \ } T>0 \ \ \mathrm{and \ solution \ to } \ \eqref{cl}\eqref{ineq:entropy}\}
\end{align*}
with the same initial value.
\end{theorem}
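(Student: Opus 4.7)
The plan is to obtain Theorem \ref{cor} as a direct corollary of Theorem \ref{theo}, by applying that theorem to a constant sequence in which the comparison solution is repeated. Let $u \in \Sbv$ be a BV solution of \eqref{cl}\eqref{ineq:entropy}, and let $v \in L^\infty([0,T];BV(\RR))$ for every $T>0$ be another entropy solution of \eqref{cl}\eqref{ineq:entropy}, with $u$ and $v$ sharing the same initial datum $u^0$. The goal is to deduce $u \equiv v$.

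The first step is to verify that $v \in \Sweak$. By hypothesis $v$ is an entropy weak solution of \eqref{cl}\eqref{ineq:entropy}, and it is uniformly bounded in $L^\infty(\RR^+\times\RR;\Nuo)$ because $\Nuo$ is bounded and $v(t,\cdot)\in BV(\RR;\Nuo)$ for each $t$. The only non-automatic ingredient is therefore the Strong Trace Property of \Cref{defi_trace}, and this is precisely the fact recalled in the paragraph preceding Theorem \ref{cor}: along every Lipschitz curve $t\mapsto X(t)$, a BV function admits strong one-sided traces $v(t,X(t)\pm)$ in the $L^1$-sense required by \Cref{defi_trace}. This follows from the fine structure theory of BV functions (existence of approximate one-sided limits outside the jump set together with a Volpert-type argument adapted to Lipschitz curves that are transverse to the jump set in a measure-theoretic sense). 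Consequently $v \in \Sweak$.

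The second step applies Theorem \ref{theo} with the constant sequence $u_n := v$ for every $n \in \NN$. This sequence is uniformly bounded in $L^\infty(\RR^+\times\RR)$ since $v$ is, and the initial data satisfy $u_n^0 = u^0$, so the difference $u_n^0-u^0$ vanishes identically and hence trivially converges to $0$ in $L^2(\RR)$. Theorem \ref{theo} then yields that $u_n \to u$ in $L^\infty(0,T;L^2(-R,R))$ for every $T,R>0$. Since $u_n$ does not depend on $n$, this means $\|v-u\|_{L^\infty(0,T;L^2(-R,R))}=0$ for every $T,R>0$, and therefore $u=v$ almost everywhere on $\RR^+\times\RR$.

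The reduction is entirely formal; the only substantive content is the embedding of BV solutions into $\Sweak$ via the Strong Trace Property, which I expect to be the main technical point if one had to prove Theorem \ref{cor} in isolation. It is however classical, and is exactly the reason that the a priori Tame Oscillation Condition or Bounded Variation Condition on space-like curves used in \cite{MR1701818,MR1757395} become unnecessary once Theorem \ref{theo} is available.
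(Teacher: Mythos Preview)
Your proof is correct and follows exactly the approach the paper intends: the paper states explicitly (just before Theorem~\ref{cor}) that any $BV$ solution verifies the Strong Trace Property and hence belongs to $\Sweak$, so Theorem~\ref{cor} is an immediate consequence of the uniqueness clause in Theorem~\ref{theo}. Your constant-sequence argument is precisely how that uniqueness clause is obtained in the proof of Theorem~\ref{theo} itself.
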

As another celebrated work  for small BV solutions of general systems of hyperbolic conservation laws, the $L^1$ stability has been established in 1990s  \cite{MR1723032,MR1686652}, or see \cite{Bbook}. 
In the $L^1$ stability theory, the  perturbations $u_n$ have to stay in the space $\Sbv$. In contrast,  this is not required in our $L^2$ based theory.
For this reason, Theorem \ref{theo} can be seen as a  weak-BV  stability result, similar to the weak-Strong stability result of  Dafermos and DiPerna.
\vskip0.3cm
Indeed, since the work of Dafermos  and DiPerna \cite{MR546634, MR523630}, it is known that on any span of time $[0,T]$ where a solution of the system is Lipschitz in $x$, the solution is $L^2$ stable (for $L^2$ perturbations on the initial value) among the large class of solutions which are bounded weak entropic solutions to the same system. This implies the  well known weak-strong uniqueness principle: as long as a solution is Lipschitz, it is unique among any other bounded weak solution. To be more precise, let us denote the two classes of solutions:
\begin{eqnarray*}
&&\Sl= \{ u\in L^\infty([0,T]\times\RR:\Nu), \ \ \mathrm{solution \ to } \ \eqref{cl}, \ \ \mathrm{with} \ \ \|\partial_xu(t)\|_{L^\infty([0,T]\times\RR)}\leq C,  \ \mathrm{for} \  C>0\}\\
&&\SweakT=\{u\in L^\infty([0,T]\times \RR:\Nuo) \ \ \mathrm{weak \ solution \ to } \ \eqref{cl}\eqref{ineq:entropy} \}.
\end{eqnarray*}
Let $\mathcal{O}$ be a compact subset of $\Nu$, and  $u$ be a solution in $\Sl$ with values in $\mathcal{O}$, and  with initial value $u^0$.
The result of Dafermos and DiPerna implies  that if $(u_n)_{n\in\NN}$ is a sequence of solutions in $\SweakT$ such that their initial values $(u_n^0)_{n\in \NN}$ converge in $[L^2(\RR)]^2$ to $u^0$, then $(u_n)_{n\in\NN}$ converges in $L^{\infty}(0,T;L^2(\RR))$ to $u$. Especially, it implies the uniqueness of solutions in $\Sl$ among the bigger class of solutions $\SweakT$ (weak/Strong uniqueness).  
\vskip0.3cm
Theorem \ref{theo} extends this result, in the context of 1-d systems with two unknowns, going from the Lipschitz space $\Sl$ to the $BV$ space $\Sbv$. 
Note, however, that the wild solutions of $\Sweak$ need to have the extra strong trace property compared to solutions of $\SweakT$.
Still, they can be valued in $\Nuo$, including states for which $f$ is not differentiable, like the vacuum for the Euler system \eqref{eq:Euler}.

\vskip0.3cm
Our work is motivated by the construction of bounded solutions via compensated compactness available for $2\times 2$ systems (see Tartar \cite{MR584398}, DiPerna \cite{MR719807}, Chen \cite{MR922139, MR924671}, Lions-Perthame and etc \cite{MR1284790,MR1383202}).  Those systems have Riemann invariants (see Smoller's book \cite{sm} and Hoff's paper \cite{MR784005})
$w_1$ and $w_2$, such that 
\[
(w_1)_t+\lambda_1(w_1)_x=0,\qquad 
(w_2)_t+\lambda_2(w_2)_x=0.
\]
This provides naturally   invariant regions as \eqref{InvReg}.
\vskip0.3cm
Strong traces properties were first proved for multivariable conservation laws \cite{MR1869441}, see also (\cite{Kwon,Yu}). The technique was later used to get more structural information on the  solutions (see \cite{Otto, Silvestre}). For systems, the question whether bounded weak solutions in $\SweakT$ verify the Strong Trace Property is mostly open. 
\vskip0.3cm
The Euler system \eqref{eq:Euler}  with $\gamma=3$ is an interesting case. Indeed,  it was proved in \cite{MR1720782} that for any initial values in $\Nuo$, one can construct global solutions with values in $\Nuo$ verifying a similar strong property in time. It would be interesting to investigate whether this property can be  extended to the property  of \Cref{defi_trace} in this context.
\vskip0.3cm
In a parallel program, it has been shown that, considering inviscid limit of Navier-Stokes equation, instead of  weak solution to the inviscid conservation laws, one can avoid the need of the strong traces property. The case of the inviscid limit of the barotropic  Navier-Stokes equation in the Lagrangian variables is considered in  \cite{2017arXiv171207348K, 2019arXiv190201792K}. It is shown that single shocks are stable (and so unique) in the class of inviscid weak limit of energy bounded solutions to  Navier-Stokes equations. Neither Boundedness of the function, nor the strong traces property are needed in this context. 
This result is a first milestone in the program of the authors to show the convergence from Navier-Stokes to Euler for initial values small in BV, a major open problem in the field \cite{Bbook}.  Theorem \ref{theo} is a second major milestone in this direction. It provides several tools needed in the program to leap from the study of a single wave solution to general Cauchy data.

\vskip0.3cm
The paper is structured as follows. We begin in Section \ref{section:prelim}  with preliminaries linked to the Bounded Variation Condition along space-like curve needed for the  $L^1$ uniqueness theory.  The proof of our result  is based on the weighted entropy method with shifts, and the front tracking method. The main ideas of our proof  are presented in Section \ref{sec:idea}. This section proves Theorem \ref{theo} from Proposition \ref{main_prop}.   The rest of the paper is dedicated to the proof of Proposition  \ref{main_prop}.  Section \ref{sec:onewave} is dedicated to the $L^2$ study of single waves. The most important one concerns the study of a single shock. 
The exact needed version, Proposition \ref{shift_existence_prop}, is proved in a companion paper \cite{companion}. 
The modified front tracking algorithm is introduced in Section \ref{section:front}.
The construction of the weight functions are performed in Section \ref{sec:Geng}. Finally, Section \ref{sec:Sam} is dedicated to the proof of Proposition \ref{main_prop}.

\section{Preliminaries} \label{section:prelim}


\vskip0.3cm
This section gather tools from the $L^1$ theory that will be useful later. Every result and notion for this section comes from \cite{Bbook}. 
Our proof uses the $L^1$ uniqueness result of \cite{MR1757395}. Let us write precisely the statement here.  Following \cite{Bbook}, we introduce first the notion of  space-like curve.
\begin{definition}[Space-like curves]\label{def:space}
Let $\hat{\lambda}$ be a fixed constant. 
Then we define a \emph{space-like curve} to be a curve of the form $\{t=\gamma(x):x\in(a,b)\}$, with
\begin{align}
\abs{\gamma(x_2)-\gamma(x_1)}< \frac{x_2-x_1}{\hat{\lambda}} \hspace{.3in} \mbox{for all } a <x_1<x_2<b.
\end{align}
\end{definition}
 In this paper, the value of $\hat{\lambda}$ will be determined by Proposition \ref{shift_existence_prop}. 
 Still following \cite{Bbook}, we now introduce the extra condition needed for the classical $L^1$ uniqueness theorem.  
\begin{definition}[Bounded Variation Condition]\label{tame_def} 
We say that a function $u\in L^\infty(\RR^+\times\RR)$ verifies the Bounded Variation Condition if 
there exists $\delta>0$ such that, for every bounded space-like curve $\{t=\gamma(x): x\in[a',b']\}$ with
\begin{align}
\abs{\gamma(x_1)-\gamma(x_2)}\leq \delta\abs{x_1-x_2}\hspace{.25in}\mbox{for all } x_1,x_2\in[a',b'],
\end{align}
the function $x\mapsto u(\gamma(x),x):=u_\gamma(x)$ is well defined and has bounded variation.
\end{definition}
Note that taking  constant functions $\gamma$ shows that these functions $u$ are $BV$ in $x$.
Let us now state a uniqueness result of  \cite{MR1757395,Bbook}, rephrased in our context.
\begin{theorem}[From \cite{Bbook,MR1757395}]\label{theo_Bressan}
 For any $d\in \Nu$, there exists $\eps>0$ such that for any $u^0$ initial value with $\|u^0\|_{BV(\RR)}\leq \eps$ and  $\|u^0-d\|_{L^\infty(\RR)}\leq \eps$, we have the following uniqueness result. 
 \vskip0.1cm
 There exists only one solution $u$ of \eqref{cl} \eqref{ineq:entropy} with initial value $u^0$ and verifying the Bounded Variation Condition of Definition \ref{tame_def}.
 
\end{theorem}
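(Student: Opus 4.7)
My plan is to deduce Theorem \ref{theo_Bressan} from the existence of a \emph{Standard Riemann Semigroup} $(S_t)_{t\geq 0}$. For $\eps$ sufficiently small, one can construct, via Glimm's scheme or the front tracking algorithm, a Lipschitz semigroup defined on a domain of small-BV functions near $d$ such that $t\mapsto S_t u^0$ is an admissible weak solution of \eqref{cl}--\eqref{ineq:entropy}, and
\[
\|S_t u^0 - S_t v^0\|_{L^1(\RR)}\leq L\,\|u^0-v^0\|_{L^1(\RR)}
\]
for any two initial data in the domain. The theorem then reduces to showing that any entropy solution $u$ with the same initial data $u^0$ and satisfying the Bounded Variation Condition coincides with $t\mapsto S_tu^0$. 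I would follow the standard Bressan strategy of proving that the right Dini derivative of $t\mapsto \|u(t,\cdot)-S_tu^0\|_{L^1}$ vanishes almost everywhere, and concluding by Gronwall.

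The core step is a local comparison estimate: at almost every $(\tau,\xi)$, one compares $u$ on the small forward cone $\{|x-\xi|<\hat{\lambda}h\}$ with the self-similar Riemann solution $w$ built from the traces $(u(\tau,\xi-),u(\tau,\xi+))$, establishing
\[
\limsup_{h\to 0^+}\frac{1}{h}\int_{\xi-\hat{\lambda} h}^{\xi+\hat{\lambda} h}\bigl|u(\tau+h,x)-w(h,x-\xi)\bigr|\,dx=0.
\]
This is obtained by: (i) using Rankine--Hugoniot and the entropy inequality \eqref{ineq:entropy} at jump points to identify the traces as admissible shocks or contact discontinuities; (ii) using the BV regularity along space-like curves issuing from $(\tau,\xi)$ to control the oscillation of $u$ away from jumps; and (iii) invoking genuine nonlinearity (Assumption \ref{assum}(b)) to match rarefaction regions. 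A parallel estimate at points of approximate continuity shows that the local evolution is approximated to first order in $h$ by the frozen Cauchy solution with constant data.

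Integrating these local estimates by a partition/covering argument in $\xi$, using the tame propagation speed $\hat{\lambda}$ supplied by Proposition \ref{shift_existence_prop} to ensure that finite cones are admissible, yields $\frac{d^+}{dt}\|u(t,\cdot)-S_tu^0\|_{L^1}\leq 0$ for a.e.\ $t\geq 0$, whence uniqueness. The main obstacle, as always in this circle of ideas, is the sharp local error analysis: one needs quadratic interaction estimates in the spirit of Glimm to guarantee that the Riemann fan really approximates $u$ on each small cone to order $o(h)$. This is precisely the point where the Bounded Variation Condition of \Cref{tame_def} enters crucially, since it prevents pathological accumulation of small-amplitude oscillations along space-like curves that would otherwise destroy the first-order matching on which the Dini-derivative computation rests.
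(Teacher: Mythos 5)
Your outline follows exactly the route the paper itself relies on: Theorem \ref{theo_Bressan} is not proved in this paper but quoted from \cite{Bbook,MR1757395}, and the proof in those references is precisely the Standard Riemann Semigroup construction combined with local comparison against Riemann fans at jumps and frozen-coefficient problems at points of approximate continuity, with the Bounded Variation Condition along space-like curves controlling oscillation, concluded by the Dini-derivative/Gronwall estimate you describe. Hence your proposal is correct and essentially the same approach as the cited proof; the sharp local error estimates you acknowledge deferring are exactly the content supplied in \cite{MR1757395}.
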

Note that \Cref{cor} replaces the condition of Definition  \ref{tame_def}, by only $u\in L^\infty(\RR^+;BV(\RR))$, and Theorem \ref{theo} by $u\in \Sweak$.
\vskip0.3cm
We will need to prove that a certain limit of solutions to a modified front tracking algorithm  inherits the Bounded Variation Condition. Still following \cite{Bbook}, we introduce  the following domination principle.
\begin{definition}[Domination]\label{domination_def}
Given two space-like curves $\gamma\colon (a,b)\to\mathbb{R}$ and $\gamma'\colon(a',b')\to\mathbb{R}$, we say that $\gamma$ \emph{dominates} $\gamma'$ if $a\leq a'<b'\leq b$ and, moreover,
\begin{align}\label{determinacy_eq}
\gamma(x)\leq \gamma'(x)\leq \min\Big\{\gamma(a)+\frac{x-a}{\hat{\lambda}},\gamma(b)+\frac{b-x}{\hat{\lambda}}\Big\} \hspace{.25in} \mbox{for all } x\in(a',b').
\end{align}
\end{definition}
This property implies  that $\gamma'$ is entirely contained in a domain of determinacy for the curve $\gamma$.
We introduce now the following property.
\begin{condition}\label{H} Let $C>0$.
Let a function $\psi\in L^\infty(\RR^+;BV(\RR))$ be  piecewise constant. We say that it verifies the Condition \ref{H} with constant $C$, if it verifies the following.
\vskip0.3cm
 Let $\gamma$ and $\gamma'$ be any two space-like curves 
 with $\gamma$ dominating $\gamma'$ (\Cref{domination_def}). Then,
\begin{align}\label{truc}
\rm{Tot. Var.}\{\psi;\gamma'\} \leq C \rm{Tot. Var.}\{\psi;\gamma\}.
\end{align}
\end{condition}
We will use the following lemma. 

\begin{lemma}\label{lem:limit}
Let $\{\psi_n\}_{n\in \NN}$ be a a family of piecewise constant functions uniformly bounded in $L^\infty(\RR^+,BV(\RR))$.
Assume that there exists $C>0$ such that   for every $n\in \NN$,  $\psi_n$ verifies Condition \ref{H} for this constant $C$, and 
\begin{equation}\label{trucmuche}
\norm{\psi_n(t,\cdot)-\psi_n(s,\cdot)}_{L^1}\leq C\abs{t-s},\qquad 0<s<t<T.
\end{equation}
Then, there exists $\psi\in L^\infty(\RR^+\times\RR)$ verifying the Bounded Variation Condition \ref{tame_def} such that,  up to a subsequence, $\psi_n$ converges  to $\psi$ when $n\to\infty$ in $C^0(0,T;L^2(-R,R))$  for every $T>0$, $R>0$, and almost everywhere in $\RR^+\times\RR$.
\end{lemma}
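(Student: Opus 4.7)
The plan is to combine Helly-type compactness in space and time with the uniform trace control given by Condition \ref{H}, the latter invoked along a carefully chosen dominating curve.

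\textbf{Step 1 (Compactness).} The hypothesis $\psi_n\in L^\infty(\RR^+;BV(\RR))$ uniformly yields both a uniform $L^\infty(\RR^+\times\RR)$ bound and a uniform bound on $\mathrm{Tot.Var.}(\psi_n(t,\cdot))$. At each fixed $t$, Helly's selection theorem furnishes a subsequence converging pointwise a.e.; a standard diagonal extraction over a countable dense set of times, combined with the time-Lipschitz estimate \eqref{trucmuche}, produces a (further extracted) subsequence converging in $C^0([0,T];L^1_{\mathrm{loc}}(\RR))$ and almost everywhere on $\RR^+\times\RR$ to some $\psi\in L^\infty(\RR^+;BV(\RR))$. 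Interpolating the uniform $L^\infty$-bound against $L^1_{\mathrm{loc}}$-convergence upgrades this to $C^0([0,T];L^2(-R,R))$ for all $T,R>0$.

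\textbf{Step 2 (Uniform trace bound via domination).} Fix any $\delta\in(0,1/\hat\lambda)$ as the parameter in Definition \ref{tame_def}, and let $\gamma'\colon[a',b']\to\RR^+$ be an arbitrary bounded space-like curve with Lipschitz constant at most $\delta$. Choose $t_0:=\min_{[a',b']}\gamma'-\varepsilon_0$ for some small $\varepsilon_0>0$, and extend to an interval $[a,b]\supset[a',b']$ with $a'-a$ and $b-b'$ large enough so that the constant curve $\gamma\equiv t_0$ on $[a,b]$ dominates $\gamma'$ in the sense of Definition \ref{domination_def}; the gap $1/\hat\lambda-\delta>0$ together with the space-like property of $\gamma'$ ensures that the upper envelope bound in \eqref{determinacy_eq} is satisfied once $a'-a\geq\hat\lambda(\gamma'(a')-t_0)$ and $b-b'\geq\hat\lambda(\gamma'(b')-t_0)$. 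Since $\gamma$ is constant in time,
\[
\mathrm{Tot.Var.}\{\psi_n;\gamma\}=\mathrm{Tot.Var.}\bigl(\psi_n(t_0,\cdot)\big|_{[a,b]}\bigr)\leq M
\]
uniformly in $n$, so Condition \ref{H} gives $\mathrm{Tot.Var.}\{\psi_n;\gamma'\}\leq CM$ uniformly.

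\textbf{Step 3 (Passage to the limit along $\gamma'$).} Because each $\psi_n$ is piecewise constant, $v_n(x):=\psi_n(\gamma'(x),x)$ is well-defined off a finite set, and the family $\{v_n\}$ is uniformly bounded with uniformly bounded total variation by Step 2. Helly selects a subsequence $v_n\to v\in BV([a',b'])$ pointwise a.e. To identify $v$ with a trace of $\psi$ along $\gamma'$, foliate a thin strip around $\gamma'$ by the parallel translates $\gamma'_h(x):=\gamma'(x)+h$, which remain space-like and $\delta$-Lipschitz; the dominating construction of Step 2 applies to each $\gamma'_h$ with a uniform constant. By Fubini's theorem applied to the a.e.\ convergence $\psi_n\to\psi$ on the strip, for a.e.\ $h$ in a neighborhood of $0$ one has $\psi_n(\gamma'_h(\cdot),\cdot)\to\psi(\gamma'_h(\cdot),\cdot)$ in $L^1([a',b'])$ along a subsequence, and lower semicontinuity of total variation under $L^1$-convergence gives $\mathrm{Tot.Var.}\{\psi;\gamma'_h\}\leq CM$. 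Letting $h\to 0$ in $L^1$ defines a canonical representative $u_{\gamma'}$ inheriting the $BV$ bound, which must agree with $v$ by the $L^1$-convergence of the $v_n$ across the foliation; Definition \ref{tame_def} is verified.

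\textbf{Main obstacle.} The most delicate point is Step 3: making sense of the trace of the limit $\psi$ along a single space-like curve, which has two-dimensional Lebesgue measure zero, so that a.e.\ convergence in the plane does not directly descend to the curve. The device is the foliation $\{\gamma'_h\}_h$ together with Fubini, but it requires checking that Step 2's dominating construction is robust under small translations of $\gamma'$ and that the diagonal extraction over $h$ and $n$ produces a limit $u_{\gamma'}$ independent of the chosen subsequence.
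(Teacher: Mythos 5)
Your Steps 1 and 2 are sound and close in spirit to the paper: Step 1 is an acceptable Helly-plus-diagonal substitute for the Aubin--Lions argument the paper uses, and Step 2's domination of an arbitrary space-like curve by a constant-in-time curve is exactly the device the paper employs to invoke Condition \ref{H} uniformly in $n$.

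The genuine gap is in Step 3. The Bounded Variation Condition (\Cref{tame_def}) asks that the actual composition $x\mapsto\psi(\gamma'(x),x)$ be well defined and of bounded variation, and this is meaningless until you fix a representative of the a.e.-defined limit $\psi$: a space-like curve has two-dimensional measure zero, so modifying $\psi$ on a null set changes its trace along $\gamma'$ arbitrarily. Your foliation-and-Fubini device only yields, for almost every translate $\gamma'_h$, an $L^1$-limit of the traces $\psi_n(\gamma'_h(\cdot),\cdot)$ with a BV bound; the curve $\gamma'$ itself ($h=0$) may lie in the exceptional set. The two closing claims --- that letting $h\to 0$ produces a ``canonical representative'' $u_{\gamma'}$, and that this limit ``must agree with $v$,'' the Helly limit of $v_n(x)=\psi_n(\gamma'(x),x)$ --- are precisely the missing steps: a.e. convergence of $\psi_n\to\psi$ in the plane says nothing about convergence along the fixed null curve $\gamma'$, so $v$ is not identified with any trace of $\psi$, and the existence of the full $h\to 0$ limit (not merely along a subsequence) is not established; moreover, even if $u_{\gamma'}$ existed, it would be a curve-by-curve construction rather than the restriction of one fixed representative of $\psi$, which is what \Cref{tame_def} requires. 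The paper closes exactly this gap differently: it fixes once and for all the right-continuous-in-$x$ representative of $\psi$, and, given arbitrary points $(t_i,x_i)$ on the curve, perturbs each $x_i$ slightly to the right to a point $x_i'$ where simultaneously $|\psi(t_i,x_i')-\psi(t_i,x_i)|<\epsilon$ (right continuity) and $\psi_n(t_i,x_i')\to\psi(t_i,x_i')$ (pointwise convergence at the fixed time $t_i$), with the spacings arranged so that the polygonal curve through the $(t_i,x_i')$ is space-like and dominated by a constant curve; Condition \ref{H} applied to $\psi_n$ along this polygonal curve then bounds $\sum_i|\psi(t_i,x_i)-\psi(t_{i-1},x_{i-1})|$ by $2m\epsilon+C\sup_n\mathrm{Tot.Var.}(\psi_n(0,\cdot))$, and letting $\epsilon\to 0$ finishes the proof. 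Without some analogue of this representative choice and pointwise approximation, your Step 3 does not deliver the statement of \Cref{tame_def}.
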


This lemma is very similar to \cite[Lemma 7.3]{Bbook}, where the same result is stated for classical  piecewise constant approximate solutions constructed by the front tracking algorithm (without shifts). For the sake of completeness, we provide a proof of it  in the appendix. 


\section{Weighted relative entropy and shifts}\label{sec:idea}


The proof of our result is based on the relative entropy method first introduced by Dafermos \cite{MR546634} and DiPerna \cite{MR523630}. From the assumption of the existence of a convex entropy $\eta$, we define an associated pseudo-distance defined for any $a,b\in \Nuo\times \Nu$:
\begin{align}
\eta(a|b)=\eta(a)-\eta(b)-\nabla\eta(b)(a-b).
\end{align}
The quantity  $\eta(a|b)$ is called the relative entropy of $a$ with respect to $b$, and is equivalent to $|a-b|^2$.  
We also define the relative entropy-flux: For $a,b\in\mathbb{R}^2$,
\begin{align}\label{def_q}
q(a;b)=q(a)-q(b)-\nabla\eta(b)(f(a)-f(b)).
\end{align}
The strength of this notion is that if $u$ is a weak solution of \eqref{cl}, \eqref{ineq:entropy}, then $u$ verifies also the full family of entropy inequalities  for any $b\in \Nu$ constant:
\begin{equation}\label{ineq:relative}
(\eta(u|b))_t+(q(u;b))_x\leq 0.
\end{equation}
Similar to the Kruzkov theory for scalar conservation laws, \eqref{ineq:relative} provides a full family of entropies measuring the distance of the solution to any fixed values $b$ in $\Nu$. The main difference is that the distance is equivalent to the square of the $L^2$  norm rather than the $L^1$ norm. Same as for the Kruzkov theory, \eqref{ineq:relative} provides directly the stability of constant solutions (by integrating in $x$ the inequalitiy). Modulating the inequality with a smooth function $t,x\to b(t,x) $ provides the well-known weak-strong uniqueness result. 
Precisely, the relative entropy is an $L^2$ theory in the following sense:
\begin{lemma}\label{l2_rel_entropy_lemma}
 For any fixed compact set $V\subset \mathcal{V}$, there exists $c^*,c^{**}>0$ such that for all $(u,v)\in \Nuo\times V$,
\begin{align}
c^*\abs{u-v}^2\leq \eta(u|v)\leq c^{**}\abs{u-v}^2.
\end{align}
The constants $c^*,c^{**}$ depend on  bounds on the second derivative of $\eta$ in $V$, and on the continuity of $\eta$ on $\Nuo$. 
\end{lemma}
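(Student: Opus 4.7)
I would split the estimate into a ``near'' regime $|u-v|<r$ and a ``far'' regime $|u-v|\ge r$, with $r>0$ depending only on $V$, and combine a local Taylor expansion with a global compactness argument.

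For the near regime, since $V$ is compact and contained in the open set $\Nu$, choose $r>0$ small enough that the closed $r$-neighborhood $\tilde V$ of $V$ still lies in $\Nu$ and is compact. On $\tilde V$, $\eta\in C^3$ and is strictly convex, so $\nabla^2\eta$ is continuous and positive definite; by compactness there exist $0<\mu\le M$ with $\mu I\le \nabla^2\eta(w)\le MI$ for every $w\in\tilde V$. For $v\in V$ and $u\in\Nuo$ with $|u-v|<r$, the segment from $v$ to $u$ lies in $\tilde V$, and Taylor's theorem with integral remainder yields
\begin{align*}
\eta(u|v)=\int_0^1(1-t)\,(u-v)^\top \nabla^2\eta(v+t(u-v))(u-v)\,dt,
\end{align*}
which immediately gives $\tfrac{\mu}{2}|u-v|^2\le \eta(u|v)\le \tfrac{M}{2}|u-v|^2$ in this regime.

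For the far regime, set $K=\{(u,v)\in\overline{\Nuo}\times V:|u-v|\ge r\}$. Since $\Nuo$ is bounded and $V$ is compact, $K$ is compact, and $(u,v)\mapsto \eta(u|v)$ is continuous on $K$ (using $\eta\in C(\Nuo)$ extended to $\overline{\Nuo}$ by continuity, together with $\nabla\eta$ continuous on the compact $V\subset\Nu$). Strict convexity of $\eta$ on $\Nuo$, combined with differentiability of $\eta$ at the interior point $v$, forces $\eta(u|v)>0$ whenever $u\ne v$, so compactness of $K$ yields $0<c'\le \eta(u|v)\le M'<\infty$ on $K$. Since $r\le |u-v|\le D:=\operatorname{diam}(\Nuo)$ on $K$, this implies $(c'/D^2)|u-v|^2\le \eta(u|v)\le (M'/r^2)|u-v|^2$. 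Taking $c^*=\min(\mu/2,c'/D^2)$ and $c^{**}=\max(M/2,M'/r^2)$ closes the argument.

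The only subtle point, which I expect is the main obstacle, is the strict positivity of $\eta(u|v)$ on $K$ when $u$ lies on $\overline{\Nuo}\setminus\Nu$ — typically points where $\eta$ need not be $C^2$, such as the vacuum state $(0,0)$ for the Euler system \eqref{eq:Euler}. One cannot invoke a pointwise Hessian there; instead one uses the global strict-convexity hypothesis on $\eta$ together with its continuity on $\Nuo$: because $v\in\Nu$ is an interior point of differentiability, $w\mapsto \eta(v)+\nabla\eta(v)(w-v)$ is an affine support to $\eta$ at $v$, and strict convexity upgrades this to $\eta(u)>\eta(v)+\nabla\eta(v)(u-v)$ for every $u\in \Nuo\setminus\{v\}$. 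This is the one place where the hypothesis $\eta\in C(\Nuo)$, as opposed to only $\eta\in C^3(\Nu)$, is genuinely needed, and it also explains the phrasing of the constants in the statement (they depend both on $C^2$ bounds in $V$ and on the continuity of $\eta$ on all of $\Nuo$).
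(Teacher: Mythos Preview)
Your argument is correct and is exactly the standard fleshing-out of what the paper records as its proof, namely the single sentence ``This elementary lemma follows directly from Taylor's theorem.'' The only step worth a second look is the extension of $\eta$ to $\overline{\Nuo}$ in the far regime (since $\Nuo$ is merely bounded, not assumed closed), but this is a cosmetic issue --- one can equally run the infimum argument via sequences inside $\Nuo$ and use that strict convexity of $\eta$ on $\Nu$ together with $\eta\in C(\Nuo)$ already forces a positive lower bound on any set bounded away from the diagonal.
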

This elementary  lemma follows directly from Taylor's theorem  (see \cite{Leger2011,VASSEUR2008323}).

\vskip0.3cm
For the family of Euler systems, it is well known that the relative entropy provides a contraction property for rarefaction function $t,x\to b(t,x)$, even in multi-D \cite{MR3357629}. This is because it verifies Assumption \ref{assum} (d)   (see \Cref{sec:onewave}). 
\vskip0.3cm 
However, when modulating the inequality with discontinuous functions $b$ with shocks, the situation diverges significantly from the Kruzkov situation. This is due to the fact that the $L^2$ norm is not as well suited as the $L^1$ norm  for the study of stability of shocks. However, the method was used by DiPerna \cite{MR523630} to show the uniqueness of single shocks (see also Chen Frid \cite{MR1911734} for the Riemann problem of the Euler equation).  In \cite{VASSEUR2008323}, it was proposed to use the method to obtain stability of discontinuous solutions. The main idea was that the $L^2$ norm can capture very well the stability of the profile of the shock (up to a shift), even if the shift itself is more sensitive \cite{Leger2011}.  Leger in \cite{Leger2011_original} showed that in the scalar settings, the shock profiles (modulo shifts) have a contraction property in $L^2$, reminiscent to the $L^1$ contraction of the Kruzkov theory.  It was shown in \cite{serre_vasseur} that the contraction property is usually false for systems. However, it can be recovered by weighting the relative entropy \cite{MR3519973}. 
More precisely, consider a fixed shock $(u_l,u_r, s)$. It was shown that there exists $0<a_1<a_2$ such that, for any wild solutions $u\in \Sweak$, we can construct a Lipschitz shift function $h:\RR^+\to\RR$ with 
\begin{equation}\label{eq:ex}
\frac{d}{dt}\left\{a_1 \int_{-\infty}^{h(t)} \eta(u(t,x)|u_L)\,dx+ a_2\int_{h(t)}^\infty \eta(u(t,x)|u_R)\,dx \right\} \leq0.
\end{equation}
Note that this formula for $a_1=a_2$, and $h(t)=st$ would imply the contraction property of the shock for the relative entropy. But the result, to be valid, needs the weights $a_i$, and the shifts $h$, giving the name to the method: a-contraction with shifts.
\vskip0.3cm
Let us emphasize that the  $L^2$ based  a-contraction is not true without the notion of shifts. This is a major obstruction to consider solutions with several waves. 
Conservation laws have finite speeds of propagation. Therefore, usually, considering a finite amount of waves is equivalent to studying a single one, at least, as long as they do not interact. Because of the shifts, it is not obvious anymore in this theory. The general idea, is that one shift by singularity is needed. Those shifts depend crucially on the perturbation. It is therefore needed to prevent that this artificial shifts do not force a  1-shock  to stick and holds to a 2-shock, making the whole process to collapse. This problem was solved in \cite{a_contraction_riemann_problem}, allowing the treatment of the Riemann problem. The main idea  is that the shifts can be constructed based on perturbed characteristic curves associated to the wild solution.
\vskip0.3cm
This article is making the leap going from the stability of the Riemann problem, to the stability of small BV solutions. Because of the generation of infinitely many shifts, 
the estimate \eqref{eq:ex} is significantly weakened in this case. Our main proposition is the following.
\begin{proposition}\label{main_prop}
Consider a system \eqref{cl}  verifying all the Assumptions \ref{assum}. Let $d\in \Nu$. Then there exist $C,v,\eps>0$ such that the following is true. 
\vskip0.1cm
For any  $m>0$, $R,T>0$.   $u^0\in BV(\RR)$  such that $\|u^0\|_{BV(\RR)}\leq \eps$ and $ \|u^0-d\|_{L^\infty(\RR)}\leq \eps$, and any wild solution $u\in \Sweak$,   there exists  $\psi\colon\mathbb{R}^+\times\mathbb{R}\to\Nu$ such that for almost every $0<s<t<T$:
\begin{eqnarray*}
&& \|\psi(t,\cdot)\|_{BV(\RR)}\leq C \|u^0\|_{BV(\RR)},\\
&&\norm{\psi(t,\cdot)-\psi(s,\cdot)}_{L^1}\leq C\abs{t-s},\\
&&\|\psi(t,\cdot)-u(t,\cdot)\|_{L^2((-R+vt,R-vt))}\leq C \left(\|u^0-u(0,\cdot)\|_{L^2(-R,R)}+\frac{1}{m}\right),\\
&& \text{The function } \psi \text{ verifies the Condition \ref{H} with constant  } C.
\end{eqnarray*}
\end{proposition}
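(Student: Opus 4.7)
The plan is to construct $\psi$ as a modified front-tracking approximation tailored to the wild solution $u$, with the four estimates following from combining standard front-tracking interaction estimates with the $a$-contraction-with-shifts theory. First I would approximate the initial data $u^0$ by a piecewise constant $\psi_m^0$ with at most $\mathcal{O}(m)$ jumps satisfying $\|\psi_m^0\|_{BV}\le\|u^0\|_{BV}$ and $\|\psi_m^0-u^0\|_{L^2}\le C/m$. I would then run the modified front tracking algorithm from Section \ref{section:front}: at each discontinuity I approximate the Riemann problem using piecewise constant approximations of rarefactions and exact shocks, but crucially each shock front $x_i(t)$ is equipped with a shift $h_i(t)$ produced by Proposition \ref{shift_existence_prop} applied to that shock against the wild perturbation $u$. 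Standard Glimm-style interaction functional estimates give the uniform BV bound (item 1) and the $L^1$-Lipschitz-in-time bound (item 2).

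For the $L^2$ estimate (item 3), I would assign a piecewise constant weight $a(t,x)$ on the intervals between adjacent shifted fronts, following Section \ref{sec:Geng}. Across each shock front the ratio $a^+/a^-$ is chosen so that the jump of the weighted flux $a\,q(u;\psi) - a\,\dot h_i\,\eta(u\,|\,\psi)$ is nonpositive: this is precisely the shifted $a$-contraction identity for a single shock, cf.~\eqref{eq:ex}. Taking $v>L$ strictly larger than the maximum characteristic speed (Assumption \ref{assum}(e)), the quantity
$$E(t) = \int_{-R+vt}^{R-vt} a(t,x)\,\eta(u(t,x)\,|\,\psi(t,x))\,dx$$
satisfies $E(t)\le E(0)+C/m$: the flux through $\{|x|=R-vt\}$ is nonpositive because $v>L$; each shock front contributes nonpositively by Proposition \ref{shift_existence_prop}; each rarefaction front is handled via Assumption \ref{assum}(d); and the residual errors from the front tracking discretization (inaccurate approximate rarefactions, non-physical fronts) are bounded by $C/m$. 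Combined with Lemma \ref{l2_rel_entropy_lemma} and uniform two-sided bounds on $a$, this produces item 3.

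Condition \ref{H} (item 4) follows by a domination/interaction argument: since the shifts $h_i$ are uniformly Lipschitz with speeds dominated by a constant that can be taken less than $1/\hat\lambda$ once $\eps$ is small, any space-like curve $\gamma'$ dominated by $\gamma$ can be connected to $\gamma$ by a monotone family of space-like curves along which the Glimm functional is nonincreasing, so the total variation of $\psi$ along $\gamma'$ is controlled by that along $\gamma$. This is the analogue for our shifted front tracking of the classical estimates in Chapter 7 of \cite{Bbook} that underpin Lemma \ref{lem:limit}.

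The main obstacle is the construction of the weights and the verification that each of the potentially numerous shifted shock fronts contributes nonpositively to $\tfrac{d}{dt}E(t)$. Each front requires its own shift from Proposition \ref{shift_existence_prop}, yet these shifts interact globally through the weight $a$, and every interaction event creates new fronts whose shifts and weights must be initialized consistently. A crucial check, inherited from the Riemann case \cite{a_contraction_riemann_problem}, is that the artificial shifts cannot force a $1$-shock to collide with a $2$-shock in a way that breaks the whole construction; this is precisely where the use of perturbed characteristic curves built from $u$ (rather than shifts defined intrinsically from $\psi$) becomes essential.
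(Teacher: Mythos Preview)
Your proposal is correct and follows essentially the same approach as the paper: a modified front-tracking scheme with shocks moved by the shifts of Proposition~\ref{shift_existence_prop}, a weight $a$ built as in Section~\ref{sec:Geng}, and the dissipation inequality for $E(t)$ assembled front-by-front on the cone. The one point you correctly flag as the ``main obstacle'' but do not resolve is exactly what the paper handles via Proposition~\ref{prop:a}: at every interaction time the weight satisfies $a(t+,x)\le a(t-,x)$ pointwise, so (together with Lemma~\ref{ap_lim_lemma} on approximate limits of the entropy) $E$ cannot jump up across collisions, and the telescoping over the interaction times closes.
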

It would be natural to try to take for the  function $\psi$,  the unique BV solution with initial value $u^0$ of Theorem \ref{theo_Bressan}. However, functions $\psi$ which verify the proposition are not solutions to \eqref{cl}. 
Instead, the proposition shows that if the initial value $u(0,\cdot)$ is $L^2$ close to a set of small BV functions, then $u(t,\cdot)$ stays $L^2$ close,  for every time $t>0$,  to a slightly bigger  set of small BV functions.
\vskip0.3cm

Despite the finite speed propagation of the equation, there are two major difficulties to obtain this result: one  is the shifts, the other is the weights. Let us give an example of the difficulties the shifts introduce. Consider a piecewise-constant  solution $\bar{u}\in  \Sbv$ to \eqref{cl} \eqref{ineq:entropy}. Until the first time that there is an interaction between the shocks in $\bar{u}$, we can represent the function $\bar{u}$ as
$$
\bar{u}(t,x)=u_i ,\qquad \mathrm{for \  } x_i+s_i t< x < x_{i+1}+ s_{i+1}t,
$$
where $(u_{i-1}, u_i, s_i)$ are admissible shocks. For any weak solution $u\in\Sweak$, the general theory of weighted relative entropy with shifts ensure the existence of shifts $t \to h_i(t)$ and a piecewise weighted function
$$
a(t,x)=a_i, \qquad \mathrm{for} \ h_{i-1}(t)\leq x \leq h_i(t),
$$
such that, as long as the functions $h_i(t)$ do not cross:
\begin{equation}\label{ineq:contraction}
\frac{d}{dt}\int_\RR a(t,x)\eta(u(t,x)|\psi(t,x))\,dx \leq0,
\end{equation}
where 
$$
\psi(t,x)= u_i \qquad \mathrm{for \  } h_{i-1}(t)\leq x_{i+1}\leq h_i(t). 
$$

Let  $t^*>0$ be such that there are no collisions between any of the wavefronts in either $\bar{u}$ or $\psi$ for $t\in[0,t^*]$.  One might then hope to control $\norm{\bar{u}(t,\cdot)-u(t,\cdot)}_{L^2}$ by using both \eqref{ineq:contraction} and by controlling $\norm{\bar{u}(t,\cdot)-\psi(t,\cdot)}_{L^2}$. However, it is difficult to control $\norm{\bar{u}(t,\cdot)-\psi(t,\cdot)}_{L^2}$. For time $t\in[0,t^*]$, the function $\psi$ can be reconstructed from the function $\bar{u}$ via a change of variables. However, if for example after time $t^*$ a collision between two waves occurs in $\bar{u}$, but the corresponding waves do not collide in $\psi$, then after the local Riemann problem in $\bar{u}$ is solved and the clock restarted, the functions $\bar{u}$ and $\psi$ \emph{cannot} be related through a change of variables. Furthermore, as in the scalar case, the best control we have on the shifts gets worse and worse as the strength of the shock (being controlled by the shift) decreases (see \cite[Theorem 1.1]{move_entire_solution_system} and \cite[Theorem 1.2]{scalar_move_entire_solution}). This is problematic, because we want the initial data of the function $\psi$ to approach the initial data for the wild solution $u$, and in general the sizes of the shocks in $\psi$ will get arbitrarily small. 

The idea then is to give up on trying to control $\bar{u}-\psi$ or the shifts.  Instead, we construct an artificial function $\psi$ which stays $L^2$ close to $u$ while sharing the structural property of $\bar{u}$ (the smallness in $BV$). If we now consider a sequence of such solutions $u_n$ such that the initial values converge to the initial value of $\bar{u}$, we can transfer, at the limit, the structural property from the $\psi_n$ functions to the limit $u$. This implies that $u$ belongs to $\Sbv$ and still verifies the Bounded Variation Condition,  and so is equal to $\bar{u}$ by the uniqueness theorem (\Cref{theo_Bressan}).
\vskip0.3cm
Note that this strategy was first introduced in \cite{2017arXiv170905610K} in the scalar case with convex fluxes. The paper \cite{2017arXiv170905610K} gives a new proof for the uniqueness of solutions verifying a single entropy condition. Previous proofs of this result were obtained by Panov \cite{Panov1994_translated} and De Lellis, Otto, and Westdickenberg \cite{delellis_uniquneness}. Their proofs were based on the link between conservation laws and Hamilton-Jacobi equations, and it seems difficult to extend them to the system case where no such relation exists.  
\vskip0.3cm
The strategy is now  to construct the  function $\psi$ of Proposition \ref{main_prop} which stays $L^2$ close to $u$, while enjoying the small $BV$ property of $\bar{u}$. We construct it via the front tracking method, from the initial value of $\bar{u}^0$, but with the propagation of discontinuities following the shifts $\{h_i\}$ (which depend on the weak solution $u$). A key point is that the $BV$ estimates obtained from the front tracking method do not depend on the propagation of these fronts. We can then control the $BV$ norm of $\psi$. Note that $\psi$ is not a solution to the equation (\ref{cl}) since the Rankin-Hugoniot conditions are not verified anymore. 
It cannot be easily compared to $\bar{u}$ either since the waves can interact in a different order. We remark that although we limit ourselves in this paper to $2\times2$ systems with genuinely nonlinear wave families, the front tracking algorithm we use works for general $n\times n$ systems with either genuinely nonlinear or linearly degenerate wave families. In particular, in this paper, we have incorporated non-physical waves (also known as pseudoshocks) into our modified front tracking algorithm. The use of non-physical waves is not required for the $2\times 2$ case, but it is necessary for applying the front tracking method to general $n\times n$ systems.
\vskip0.3cm
The last difficulty is due to the weight function $a$. In order to obtain the contraction property \eqref{ineq:contraction}, we have constraints on the variations of the weights $a_i-a_{i-1}$ which depend both on the size and  the family of the shock $(u_{i-1}, u_i,s_i)$. This means that the weight function has to be reconstructed at each collision time between two waves. The variations of the weight function has to be controlled precisely to ensure that it stays bounded away from 0. The construction of the weight is closely related to the front tracking method, and the control of its $BV$ norm mirrors the $BV$ control on the function $\psi$ itself. 
\vskip0.3cm
For this procedure, a  key  refinement of the a-contraction for a single shock is  provided by the companion paper \cite{companion}. It shows that the size of variation of the weight $|a_i-a_{i-1}|$ can be chosen proportionally to the strength of the associated shock wave $|\sigma_i|\approx |u_{i-1}-u_i|$. This property was first showed in the class of inviscid limit of Navier-Stokes \cite{2017arXiv171207348K, 2019arXiv190201792K}. Surprisingly, the proof based directly on the inviscid model is very different, and quite delicate. 
\vskip0.3cm
We finish this section showing how Proposition \ref{main_prop} implies Theorem \ref{theo}.

\vspace{0.3cm}
\noindent\emph{Proof of Main Theorem (\Cref{theo})}.
For each $d\in \overline{\mathcal{O}}$, consider  $\eps_d>0$ such that both  Proposition~\ref{main_prop} and Theorem  \ref{theo_Bressan} are valid. The union (over $d$) of the balls $B_{\eps_d/2}(d)$ cover the compact  $\overline{\mathcal{O}}$, so there exists a finite subcover. Denote $\eps>0$ the smallest of the $\eps_{d_i}/2$ for this finite subcover. 

\vskip0.1cm
By passing to a subsequence if necessary, we assume that $\norm{u_m^0-u^0}_{L^2} \leq \frac{1}{m}$.  From \Cref{main_prop} we have a sequence of functions  $\psi_m$ (for all $m\in\mathbb{N}$),  
  uniformly bounded in $L^\infty(\RR^+,BV(\RR))$. 
Moreover   $\psi_m$ verify Condition \ref{H} and \eqref{trucmuche} uniformly, and they verify for all time $t>0$:
\begin{equation}\label{gg}
\norm{\psi_m(t,\cdot)-u_m(t,\cdot)}_{L^2(\RR)}\leq \frac{2}{m}.
\end{equation}

From Lemma \ref{lem:limit}, there exists $\psi\in L^\infty(\RR^+\times\RR)$ verifying the Bounded Variation Condition (\Cref{tame_def}) such that 
for every $T>0, R>0$, $\psi_m$ converges in $C^0(0,T:L^2(-R,R))$ to $\psi$. Together with \eqref{gg},
$u_m$ converges in $L^\infty(0,T:L^2(-R,R))$ to $\psi$. Since the convergence is strong and $u_m$ verifies   \eqref{cl} \eqref{ineq:entropy}, the limit $\psi$ is also solution to \eqref{cl} \eqref{ineq:entropy}, with initial value $u^0$. From Theorem \ref{theo_Bressan}, it is the unique solution verifying \Cref{tame_def}.
\vskip0.3cm
Applying the result to the constant sequence $\overline{u}_n= u$, the fixed BV function with initial value $u^0$ from the hypotheses of the theorem, shows that $u$ is also this unique solution. Therefore $\psi=u$. This ends the proof of \Cref{theo}.
The rest of the paper is dedicated to the proof of Proposition  \ref{main_prop}.

\section{Relative entropy for the Riemann problem}\label{sec:onewave}

We  first state the refined a-contraction property of shocks for the weighted relative entropy with shifts. This result is proved in \cite{companion}. Note that the constant $L$ is defined in Assumption \ref{assum} (e).

\begin{proposition}\label{shift_existence_prop}
Consider a system \eqref{cl}  verifying all the Assumptions \ref{assum}.
Let $d\in \Nu$. Then there exist constants $\alpha_1,\alpha_2, \hat{\lambda}$ and  $C, \eps>0$, with   $\alpha_1<\alpha_2$ and  $\hat{\lambda}\geq 2L$,  such that the following is true.\vskip0.1cm

Consider any shock  $(u_L,u_R)$   with $|u_L-d|+|u_R-d|\leq \eps$, any $u\in \Sweak$, any $\bar{t}\in[0,\infty)$, and any $x_0\in \RR$.
 Let  $\sigma$ be the strength of the shock $\sigma=|u_L-u_R|$.
Then for any $a_1>0,a_2>0$ verifying 
\begin{eqnarray*}
1- 2C\sigma\leq \frac{a_2}{a_1}\leq 1-\frac{C\sigma}{2} &&\text{ if } (u_L,u_R) \text{ is a 1-shock}\\
1+\frac{C\sigma}{2}\leq \frac{a_2}{a_1}\leq 1+2C\sigma && \text{ if } (u_L,u_R) \text{ is a 2-shock},
\end{eqnarray*}
there exists a Lipschitz shift function  $h:[\bar{t},\infty)\to\mathbb{R}$, with $h(\bar{t})=x_0$,  such that the following dissipation functional verifies
\begin{align} \nonumber
&a_1\left[q(u(t,h(t)+);u_R)-\dot{h}(t) \ \eta(u(t,h(t)+)|u_R)\right]\\ \label{diss:shock}
&\qquad\qquad  -a_2\left[q(u(t,h(t)-);u_L)-\dot{h}(t) \ \eta(u(t,h(t)-)|u_L)\right] \leq 0
\end{align}
for almost all  $t\in[\bar{t},\infty)$.

\vskip0.2cm

Moreover, if $(u_L,u_R)$ is a 1-shock, then for almost all  $t\in[\bar{t},\infty)$:  
$$
-\frac{\hat{\lambda}}{2}\leq \dot{h}(t) \leq \alpha_1<\inf_{v\in B_{2\eps}(d)}\lambda_2(v).
$$
Similarly, if $(u_L,u_R)$ is a 2-shock,  then for almost all  $t\in[\bar{t},\infty)$: 
$$
\sup_{v\in B_{2\eps}(d)}\lambda_1(v)< \alpha_2\leq \dot{h}(t) \leq\frac{\hat{\lambda}}{2}.
$$



\end{proposition}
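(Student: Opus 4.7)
The plan is to construct the shift $h$ by solving a generalized ODE whose right-hand side is chosen, at each pair of traces $(u_-,u_+)$, to make the dissipation functional in \eqref{diss:shock} nonpositive. Define, for any scalar $v$ in the admissible velocity range, the quantity
$$
D_v(u_-,u_+) := a_1\bigl[q(u_+;u_R)-v\,\eta(u_+|u_R)\bigr] - a_2\bigl[q(u_-;u_L)-v\,\eta(u_-|u_L)\bigr].
$$
The key pointwise lemma to establish is: for every pair $(u_-,u_+)$ in a small neighborhood of $(u_L,u_R)$ (or more generally in $B_{2\varepsilon}(d)^2$), there exists a velocity $V(u_-,u_+)$, lying in $[-\hat{\lambda}/2,\alpha_1]$ for 1-shocks (resp.\ $[\alpha_2,\hat{\lambda}/2]$ for 2-shocks), with $D_{V(u_-,u_+)}(u_-,u_+)\leq 0$. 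Given such $V$, the strong trace property (\Cref{defi_trace}) lets us construct $h$ as a Lipschitz solution of $\dot{h}(t)=V(u(t,h(t)-),u(t,h(t)+))$ in a Filippov sense, starting from $h(\bar{t})=x_0$; the Lipschitz bound follows automatically from the confinement of $V$ to a bounded interval.

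The pointwise lemma itself I would prove by case analysis relative to the unperturbed shock. At $(u_-,u_+)=(u_L,u_R)$ and $v=\sigma^1_{u_L}(s_{u_L})$, Rankine--Hugoniot combined with $a_1=a_2$ gives $D_v=0$, and entropy admissibility gives $\leq 0$ even without the weight. For perturbed traces, linearity of $D_v$ in $v$ reduces the problem to checking the signs of $D_v$ at the two velocity endpoints. Here Assumptions (g)--(j) are used crucially: for a 1-shock, (g) ensures $\sigma>\lambda_1(u_R)$, which combined with (h) and the choice $\alpha_1<\inf\lambda_2$ forces a definite sign for $D_{\alpha_1}$ when $u_+$ is not on the shock curve; and the choice $-\hat{\lambda}/2\leq -L$ forces a definite sign at the other endpoint by the flux bound (e). By continuity of $D_v$ in $(u_-,u_+,v)$ and the intermediate value theorem, a suitable $V(u_-,u_+)$ exists, and its measurable (even locally Lipschitz off a small set) selection can be extracted.

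The role of the weight ratio constraints is the delicate part. Expanding $D_{v}$ in the small parameters $u_\pm-u_{L/R}$ and $v-\sigma$, one finds that the leading nontrivial contribution couples $a_2-a_1$ with the shock strength $\sigma$: the dissipation when the traces drift \emph{along} the shock curve behaves like $(a_2-a_1)\,\partial_s\eta(u_L|S^1_{u_L}(s))$ modulated by a coefficient of order $\sigma$. Assumption (k) (shocks strengthen with $s$) gives the correct sign, but forces $\frac{a_2}{a_1}-1$ to be of the \emph{correct sign and correct order} relative to $\sigma$: this is precisely the two-sided constraint $1-2C\sigma\leq a_2/a_1\leq 1-\tfrac{C\sigma}{2}$ for 1-shocks. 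The lower bound prevents over-weighting, which would be incompatible with admissible shifts near $v=\alpha_1$; the upper bound ensures a strictly negative leading term when traces drift along the 1-shock curve away from $u_R$.

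The main obstacle is establishing the sharp lemma with the window width proportional to $\sigma$ (rather than merely $O(1)$, as in earlier a-contraction work). This proportionality is unavoidable for the later BV analysis of the weight function in \Cref{sec:Geng}, where one needs $|a_i-a_{i-1}|\asymp|\sigma_i|$ so that the weight's total variation is controlled by the total strength of shocks. Proving the two-sided constraint requires a careful second-order expansion of $D_v$ along the shock curve, exploiting (d) (convexity/concavity of $\ell\cdot f$) to control the transverse directions and (k) for the tangential direction; the delicate issue is handling cross terms of size $\sigma\cdot|u_--u_L|$ uniformly, which is where the companion paper \cite{companion} does the hard work and which I would import as the central technical input.
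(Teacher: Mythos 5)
The paper contains no internal proof of this proposition: it is exactly the result imported from the companion paper \cite{companion}, so the only ``proof'' in the paper is that citation. Your proposal ultimately does the same thing --- you state that the delicate two-sided weight constraint ``is where the companion paper does the hard work and which I would import as the central technical input'' --- but that input \emph{is} the statement you were asked to prove, so as a standalone argument the proposal is circular and has a genuine gap rather than a proof. The outer skeleton you describe (a velocity field $V(u_-,u_+)$ with $D_{V}\leq 0$, a Filippov solution of $\dot h=V(u(t,h(t)-),u(t,h(t)+))$ built from the strong trace property, and the observation that $D_v$ is affine in $v$ so only the endpoint velocities matter) is indeed the standard a-contraction-with-shifts machinery; the entire content of the proposition is the pointwise lemma itself, namely that $\min\bigl(D_{v_{\min}},D_{v_{\max}}\bigr)\leq 0$ for \emph{every} trace pair in $\Nuo$ (not just pairs close to $(u_L,u_R)$), uniformly over all weight ratios in a window of width proportional to $\sigma$.

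Your sketch of that lemma would not go through as written. The traces $u_\pm$ of a wild solution $u\in\Sweak$ are only bounded (they may be order-one far from the shock, even at states where $f$ is not differentiable, such as vacuum for Euler), so an expansion in the small parameters $u_\pm-u_{L/R}$ and $v-\sigma$ is unavailable precisely in the regime where the estimate is hardest; the known proofs handle the far-field via the structural hypotheses (f)--(k) on the shock curves and a careful study of the dissipation on level sets of the relative entropy, not via Taylor expansion, and the sharpening from an $O(1)$ weight window (as in the earlier a-contraction literature) to an $O(\sigma)$ window is the new technical achievement of \cite{companion}. Two smaller inaccuracies: at $(u_-,u_+)=(u_L,u_R)$ all relative quantities vanish, so $D_v=0$ for any $v$ and any $a_1,a_2$ --- Rankine--Hugoniot and $a_1=a_2$ play no role there; and a continuity-plus-intermediate-value selection of $V$ is not by itself enough, since the Filippov construction uses values of $V$ at nearby trace pairs and one must check the dissipation inequality survives the convexification (routine in the cited works, but it needs to be said).
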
 


Integrating \eqref{ineq:relative}  with $b= u_L$ for  $x\in (-\infty, h(t))$, integrating \eqref{ineq:relative} with $b= u_R$ for $x\in (h(t),\infty)$, summing the results, and using \eqref{diss:shock} together with the strong traces property \Cref{defi_trace} provides the contraction property \eqref{ineq:contraction} in the case of a single shock as long as,
$$
a_2/a_1 \text{ is between } 1+\frac{C}{2}(-1)^i\sigma \text{ and } 1+2C(-1)^i\sigma,
$$
when $(u_L,u_R)$ is a $i$-shock.
It shows that the variation of the $a$ function has to be negative for a 1-shock, positive  for a 2-shock, and   can be chosen with strength of the same order as the size of the shock. The estimates on $\dot{h}$ show that we keep a finite speed of propagation, and 
that a shift of a 1-shock cannot overtake the shift of a 2-shock if it started on its left. This is  important because when we introduce shifts into the solution to a Riemann problem with two shocks,  both shock speeds move with  artificial velocities. We need to ensure that the positions of the shocks do not touch at some time after the initial time to preserve the property of  classical solutions to the Riemann problem, where shocks born from a solution to a Riemann problem will never touch.
\vskip0.3cm
We need a similar control for approximations of rarefactions via the  front tracking method. We begin to show that, under the Assumption \ref{assum} (b)(d), the real rarefaction has a contraction property without  the need of shift.

\begin{lemma}\label{lem:rar}
Consider a system \eqref{cl}  verifying all the Assumptions \ref{assum}.
Let $\bar{u}(y)$ $v_L\leq y\leq v_R$ be a rarefaction wave for \eqref{cl}. Then for any $u\in \Sweak$ and every $t>0$ we have
\begin{eqnarray*}
\frac{d}{dt} \int_{v_Lt}^{v_Rt}\eta(u(t,x)|\bar{u}(x/t))\,dx&\leq& q(u(t,v_Lt+);\bar{u}(v_L))-q(u(t,v_Rt-)|\bar{u}(v_R))\\
&&-v_L\eta(u(t,v_Lt+)|\bar{u}(v_L))+v_R\eta(u(t,v_Rt-)|\bar{u}(v_R)).
\end{eqnarray*}
\end{lemma}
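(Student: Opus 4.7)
The plan is to integrate the standard relative entropy inequality
$$\partial_t \eta(u|\bar u) + \partial_x q(u;\bar u) \leq E$$
over the moving interval $(v_L t, v_R t)$, where $E$ is the usual error term produced when $\bar u$ is not a constant state. The key point is that when $\bar u(x/t)$ is a rarefaction, Assumption \ref{assum} (d) forces $E \leq 0$, so the only contributions that survive come from the boundary, and the Leibniz rule for moving endpoints then yields exactly the claimed inequality.

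To derive the inequality, I use the entropy inequality for $u$, the fact that $\bar u$ is a classical smooth solution of \eqref{cl} on the wedge $\{v_L t < x < v_R t\}$, and the compatibility condition $q'=\eta' f'$. A direct computation (using $\partial_t \bar u = -f'(\bar u)\partial_x \bar u$ and the self-similar identity $\partial_x \bar u(x/t) = \tfrac{\beta}{t}\, r_i(\bar u)$, where $r_i$ is the right eigenvector of the rarefaction family and $\beta$ is determined by the parameterization $x/t = \lambda_i(\bar u)$) reduces the error term to
$$E = -\frac{\beta}{t}\big[g(u) - g(\bar u)\big], \qquad g(v) := \ell(\bar u) \cdot f(v) - \lambda_i(\bar u)\, \ell(\bar u) \cdot v,$$
with $\ell(\bar u) := D^2\eta(\bar u)\, r_i(\bar u)$.

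The crucial algebraic observation is that the symmetry of $D^2\eta \cdot f'$ (forced by $q' = \eta' f'$), combined with $f'(\bar u) r_i = \lambda_i r_i$, implies that $\ell(\bar u)$ is a left eigenvector of $f'(\bar u)$ with eigenvalue $\lambda_i(\bar u)$. Hence $g'(\bar u) = 0$, so $\bar u$ is a critical point of $g$. By Assumption \ref{assum} (d), $v \mapsto \ell(\bar u) \cdot f(v)$, and therefore $g$, is either convex or concave on $\Nu$, so $\bar u$ is a global extremum. The correct sign is fixed by computing $g''(\bar u)(r_i, r_i) = (\nabla \lambda_i \cdot r_i)(\bar u)\, \ell \cdot r_i$; since $\eta$ is strictly convex, $\ell \cdot r_i = r_i^T D^2\eta\, r_i > 0$, while the Lax admissibility of the rarefaction (equivalently, $\lambda_i$ increasing along the wave) together with $y = \lambda_i(\bar u(y))$ forces $\beta(\nabla \lambda_i \cdot r_i) > 0$. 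Whether Assumption (d) produces convexity or concavity, we conclude $\beta(g(u) - g(\bar u)) \geq 0$, hence $E \leq 0$.

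To conclude, the moving-boundary Leibniz rule gives
$$\frac{d}{dt}\!\int_{v_Lt}^{v_Rt}\!\!\eta(u|\bar u)\,dx = \int_{v_Lt}^{v_Rt}\!\partial_t \eta(u|\bar u)\,dx + v_R \eta(u(t,v_Rt-)|\bar u(v_R)) - v_L \eta(u(t,v_Lt+)|\bar u(v_L)),$$
and integrating $\partial_t \eta(u|\bar u) \leq - \partial_x q(u;\bar u)$ over $(v_L t, v_R t)$ produces the remaining two $q$ boundary terms of the stated RHS. The main technical subtlety is giving meaning to the pointwise traces of $u$ along the Lipschitz lines $x = v_L t$ and $x = v_R t$, since $u$ is only in $L^\infty$. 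This is precisely the role of the Strong Trace Property \Cref{defi_trace}: I test the distributional inequality against a smooth approximation of $\mathbf{1}_{\{v_L t < x < v_R t\}}$ and pass to the limit, using \Cref{defi_trace} on the two boundary curves to identify the limiting flux terms.
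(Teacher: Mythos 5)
Your proposal is correct and follows essentially the same route as the paper: modulate the relative entropy by the rarefaction fan, note that $\ell=\eta''(\bar u)\,\partial_x\bar u$ is a left eigenvector of $f'(\bar u)$ so that Assumption (d) together with the self-similar parameterization $y=\lambda_i(\bar u(y))$ fixes the sign of the error term $\partial_x\{\eta'(\bar u)\}\,f(u|\bar u)$, and then integrate over the wedge using the Strong Trace Property to handle the moving boundaries. Your $\beta$-bookkeeping and the convex/concave dichotomy are just a slightly more explicit version of the paper's choice $r=\tfrac1t\bar u'(x/t)$, which normalizes $\lambda'\cdot r>0$ and forces the convex case directly.
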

\begin{remark}
We generalize the result and proof known for Euler. For one possible reference for this, see \cite{VASSEUR2008323}.
\end{remark}
\begin{proof}
Following \cite{MR546634} or \cite{VASSEUR2008323} we have that for any $u\in \Sweak$ and any  $v\in\Sl$:
$$
\partial_t \eta(u|v)+\partial_x q(u;v) +\partial_x\{\eta'(v)\}f(u|v)\leq 0.
$$
The relative flux is defined analogously to the relative entropy. For $a,b\in \Nuo\times \Nu$, we define it as:
\begin{align*}
f(a|b)=f(a)-f(b)-\nabla f(b)(a-b).
\end{align*}
Since $\bar{u}$ is a rarefaction, for all $y$, $\bar{u}'(y)$ is a right eigenvector of $f'(\bar{u}(y))$, and so from Assumption \ref{assum} (b): $\bar{u}'(y)\cdot \lambda'(\bar{u}(y))>0$.
For $v(t,x)=\bar{u}(x/t)$,  $\partial_xv=\frac{1}{t}\bar{u}'(x/t)=r$ is also a right eigenvector of $f'(\bar{u}(y))=f'(v)$ with the same property  from Assumption \ref{assum} (b):
\begin{equation}\label{genuine}
r\cdot  \lambda'(v)>0.
\end{equation}
Denote $\ell=\partial_x\{\eta'(v)\}=\eta''(v)\partial_xv=\eta''(v) r$. 
For $v$ fixed, since $\eta''$ is positive
\begin{equation}\label{positive}
\ell\cdot r=r^T\eta''(v)r>0.
\end{equation}
Since $\eta''(v)f'(v)$ is symmetric
$$
\ell^T f'(v)=r^T\eta''(v)f'(v)=\eta''(v)f'(v) r=\lambda(v) \eta''(v)r=\lambda(v)\ell.
$$
 Hence $\ell$ is a left eigenvector of $f'(v)$.
From Assumption \ref{assum} (d), we have that $\ell\cdot f$ is either convex or convave.
Let $r(u)$ be a right eigenvector of $f'(u)$ for the same family. Using that $\ell$ is a left eigenvector of $f'(v)$, we get
$$
\ell^T(f'(u)-f'(v))r(u)= \ell\cdot r(u) (\lambda(u)-\lambda(v)).
$$
passing to the limit $u$ goes to $v$, and taking the value along $r=r(v)$ we find:
$$
\ell f''(v)(r,r)=(\lambda'(v)\cdot r)(\ell\cdot r)>0
$$
thanks to \eqref{genuine} and \eqref{positive}. Hence, $\ell\cdot f$ is convex, and 
$$
u\to \partial_x\{\eta'(v)\}f(u|v)
$$
is non-negative for all $u\in \Nu$. Finally
$$
\partial_t \eta(u|v)+\partial_x q(u;v) \leq0.
$$
Integrating in $x$ between $v_Lt$ and $v_R t$, and using the Strong Trace Property (\Cref{defi_trace})  gives the result.
\end{proof}
We can now give the control needed for the error due to the approximation of the rarefaction via the front tracking method.

\begin{proposition}\label{dissipation_discrete_rarefaction}
There exists a constant $C>0$ such that the following is true.
For any  $\bar{u}(y)$ $v_L\leq y\leq v_R$  rarefaction wave for \eqref{cl}, denote 
$$
\delta=|v_L-v_R|+\sup_{y\in [v_L,v_R]} |u_L-\bar{u}(y)|, \qquad \bar{u}(v_L)=u_L, \ \bar{u}(v_R)=u_R.
$$
 Then for any $u\in \Sweak$, any $v_L\leq v\leq v_R$,  and any $t>0$ we have:
 $$
 \int_{0}^{t} \left\{
q(u(t,tv+);u_R)-q(u(t,tv-);u_L)-v\left( \eta(u(t,tv+)|u_R) -\eta(u(t,tv-)|u_L) \right)
 \right\}\,dt
 \leq C\delta |u_L-u_R|
 t.
 $$
\end{proposition}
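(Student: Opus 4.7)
My plan is to bound the integrand $\Phi(s) := q(u(s,sv+);u_R) - q(u(s,sv-);u_L) - v[\eta(u(s,sv+)|u_R) - \eta(u(s,sv-)|u_L)]$ pointwise by $C\delta|u_L-u_R|$ and then integrate in $s \in [0,t]$. First I reduce to a one-variable quantity: adding and subtracting $q(u_+;u_L) - v\eta(u_+|u_L)$, where $u_\pm := u(s,sv\pm)$, splits $\Phi = F(u_+) + B$ with
\[
F(u) := [q(u;u_R) - v\eta(u|u_R)] - [q(u;u_L) - v\eta(u|u_L)]
\]
and $B := [q(u_+;u_L) - v\eta(u_+|u_L)] - [q(u_-;u_L) - v\eta(u_-|u_L)]$. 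Any jump of the weak entropy solution $u$ across $x = sv$ must travel at speed $v$ and satisfy Rankine--Hugoniot, so $B$ collapses to the classical entropy dissipation $q(u_+) - q(u_-) - v[\eta(u_+) - \eta(u_-)] \leq 0$. Hence $\Phi \leq F(u_+)$, and it suffices to show $F(u) \leq C\delta|u_L - u_R|$ for $u$ in the bounded range of admissible traces.

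The next step is a clean integral representation for $F$ along the rarefaction curve. Parametrize $\bar u$ by arclength with $\bar u(0) = u_L$, $\bar u(s) = u_R$, $\bar u'(\tau) = r_i(\bar u(\tau))$, and write $\phi(w) := f(w) - vw$. Using $q' = \eta' f'$ gives $g(u) := q(u) - \eta'(u)f(u)$ with $g'(u) = -\eta''(u)f(u)$, and likewise $h(u) := \eta(u) - \eta'(u)u$ has $h'(u) = -\eta''(u)u$; a direct manipulation then yields
\[
F(u) = \int_0^s r_i(\bar u(\tau))^T \eta''(\bar u(\tau))\bigl[\phi(\bar u(\tau)) - \phi(u)\bigr]\, d\tau.
\]
Differentiating twice in $u$, $\partial_u^2 F(u) = -\int_0^s r_i^T(\bar u)\eta''(\bar u) f''(u)\,d\tau$. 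Since the row $r_i^T(\bar u)\eta''(\bar u)$ is a left eigenvector of $f'(\bar u)$ (symmetry of $\eta''f'$ from Assumption \ref{assum}(c)), Assumption \ref{assum}(d) forces $r_i^T\eta'' \cdot f$ to be either convex or concave, with the sign pinned down by genuine nonlinearity to \emph{convex}. Hence $\partial_u^2 F \leq 0$: the function $F$ is concave in $u$, and attains its maximum at a unique critical point $u^\dagger$.

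I would locate $u^\dagger$ near the natural pivot $u^* := \bar u(\tau^*)$ with $\lambda_i(u^*) = v$ (possible since $v \in [v_L,v_R]$). Plugging $u = u^*$ into the integral representation, $\phi(\bar u(\tau)) - \phi(u^*) = \int_{\tau^*}^{\tau}(\lambda_i(\bar u) - v)r_i(\bar u)\,d\tau'$ and $|\lambda_i(\bar u) - v| \leq \delta$ give $F(u^*) = O(\delta s^2)$. For the gradient $\partial_u F(u^*) = -(\eta'(u_R) - \eta'(u_L))(f'(u^*) - vI)$, the leading-order row $r_i^T(u_L)\eta''(u_L)$ is $\eta''$-orthogonal to $r_j(u_L)$ for $j \neq i$ (a consequence of symmetry of $\eta''f'$ together with Assumption \ref{assum}(a)), so it annihilates the transverse part of $(f'(u^*)-vI)$ and leaves only the $r_i$-component, which is of size $|\lambda_i(u_L) - v| = O(\delta)$; hence $\partial_u F(u^*) = O(s\delta)$. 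The Hessian is $\sim -s$ (strictly negative definite by \ref{assum}(b)+(d)), so $|u^\dagger - u^*| = O(\delta)$ and Taylor expansion yields $F(u^\dagger) \leq F(u^*) + O(s\delta^2)$. Using $s = |u_L - u_R| \leq C\delta$, this is $\leq C\delta|u_L - u_R|$ uniformly in $u$; integrating over $s \in [0,t]$ completes the proof. The main obstacle is extracting the $\delta$-smallness of $\partial_u F(u^*)$: a straightforward modulus bound gives only $O(s)$, and the crucial gain comes from the $\eta''$-orthogonality of distinct eigenvectors combined with Assumption \ref{assum}(d).
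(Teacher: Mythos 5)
Your route is genuinely different from the paper's and, in outline, it works. The paper never tries to bound the flux integrand pointwise in time: it integrates the relative entropy inequality \eqref{ineq:relative} (with $b=u_L$ on $(v_Lt,vt)$ and $b=u_R$ on $(vt,v_Rt)$) and subtracts the fan term $\int\eta(u|\bar u(x/t))$, whose dissipation is controlled by Lemma \ref{lem:rar}; the lateral boundary terms cancel, so the time integral of the integrand is bounded by the difference of the two relative-entropy integrals at time $t$, which is estimated crudely by $C\delta|u_L-u_R|t$. You instead prove the stronger pointwise-in-$s$ bound $\Phi(s)\le C\delta|u_L-u_R|$ by splitting off the jump term $B$ (killed by Rankine--Hugoniot and the entropy jump inequality along the line $x=vs$) and maximizing the concave function $F$ over the state space, with the sonic point $u^*$ ($\lambda_i(u^*)=v$) as pivot. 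The structural input is the same in both proofs --- Assumption \ref{assum} (b)+(d) forcing convexity of $u\mapsto \eta''(\bar u)\,r_i(\bar u)\cdot f(u)$, exactly the computation in Lemma \ref{lem:rar} --- but your argument additionally needs (and should state) that a weak solution with the Strong Trace Property (\Cref{defi_trace}) satisfies Rankine--Hugoniot and the entropy jump condition a.e.\ along the Lipschitz line $x=vs$; this is standard but is not free, whereas the paper only uses the traces as boundary values when integrating over regions. Note also that your maximization must be carried out over the full trace range $\Nuo$ (the wild solution is not close to the rarefaction), which is fine precisely because (d) gives convexity of $\ell\cdot f$ on all of $\Nu$.

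The one step that is not justified as written is the claim that the Hessian of $F$ is ``$\sim -s$, strictly negative definite by (b)+(d),'' with a unique interior critical point $u^\dagger$ satisfying $|u^\dagger-u^*|=O(\delta)$. Assumptions (b) and (d) only give positive semidefiniteness of $D^2(\ell\cdot f)$ on $\Nu$ together with strict positivity in the single direction $r_i$ at the base point; nothing prevents degeneracy in the transverse direction, so neither strict concavity of $F$ nor the location/uniqueness of $u^\dagger$ follows, and the Taylor step at $u^\dagger$ is unsupported. Fortunately this step is dispensable: concavity alone gives the supporting-hyperplane bound $F(u)\le F(u^*)+\nabla F(u^*)\cdot(u-u^*)$ for all $u$ in the (bounded) state region, and your estimates $F(u^*)=O(\delta s^2)$ and $|\nabla F(u^*)|=O(s\delta)$ (even the crude $O(s^2)$ suffices, since monotonicity of $\lambda_i$ along the curve gives $s\le C|v_R-v_L|\le C|u_L-u_R|\le C\delta$) already yield $\max F\le C\delta|u_L-u_R|$. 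With that repair, and the trace/Rankine--Hugoniot justification spelled out, your proof is correct and in fact gives a slightly stronger (pointwise-in-time) statement than the proposition requires.
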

\begin{proof}
Consider the quantity 
\begin{eqnarray}\nonumber
&&D=\frac{d}{dt}\left\{\int_{v_Lt}^{vt} \eta(u(t,x)|u_L)\,dx+\int_{vt}^{v_Rt} \eta(u(t,x)|u_R)\,dx -\int_{v_Lt}^{v_Rt} \eta(u(t,x)|\bar{u}(x/t))\,dx\right\}. \label{eq:rar}
\end{eqnarray}
The dissipation due to the shocks of $u$ cancels out. Therefore, using twice \eqref{ineq:relative} with equality, (once with $b=u_L$, and once with $b=u_R$), and the inequality of Lemma \ref{lem:rar}, we find:
$$
D\geq q(u(t,tv+);u_R)-q(u(t,tv-);u_L)-v\left( \eta(u(t,tv+)|u_R) -\eta(u(t,tv-)|u_L)\right).
$$
Integrating in time gives that 
\begin{eqnarray*}
&&\qquad \int_{0}^{t} \left\{q(u(t,tv+);u_R)-q(u(t,tv-);u_L)-v\left( \eta(u(t,tv+)|u_R) -\eta(u(t,tv-)|u_L) \right)
 \right\}\,dt\\
 &&\leq \int_0^t D\,dt\\
 && \leq \int_{v_Lt}^{vt} \eta(u(t,x)|u_L)\,dx+\int_{vt}^{v_Rt} \eta(u(t,x)|u_R)\,dx -\int_{v_Lt}^{v_Rt} \eta(u(t,x)|\bar{u}(x/t))\,dx\\
 &&\leq \int_{v_Lt}^{vt} \left( \eta(\bar{u}(x/t))-\eta(u_L)+ \eta'(\bar{u}(x/t))(u-\bar{u}(x/t))-\eta'(u_L)(u-u_L)  \right)\,dx\\
 &&\qquad \qquad +\int_{vt}^{v_Rt} \left( \eta(\bar{u}(x/t))-\eta(u_R)+ \eta'(\bar{u}(x/t))(u-\bar{u}(x/t))-\eta'(u_R)(u-u_R)  \right)\,dx\\
 &&\leq C\delta |u_L-u_R| t.
\end{eqnarray*}
\end{proof}

For the sake of completeness, we  conclude  this section by proving that Assumptions \ref{assum} are  always verified for the isentropic Euler  system \ref{eq:Euler}.
\begin{lemma}
Consider the  system \eqref{eq:Euler}, with state set \eqref{InvReg}. Then  Assumptions \ref{assum} are verified.
\end{lemma}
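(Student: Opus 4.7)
The plan is to run through Assumption \ref{assum}, items (a)--(k), handling the classical items by citation and concentrating the actual work on (d) and (e), which are the two items flagged by the authors as needing proof here.

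\textbf{The easy items.} Items (a), (b), (c) are classical for the $\gamma$-law gas with $\gamma>1$: strict hyperbolicity $\lambda_{1,2}=v\mp c$ with $c=\sqrt{\gamma\rho^{\gamma-1}}>0$ on $\Nu$; genuine nonlinearity of both families for $\gamma>1$; and strict convexity of the physical entropy $\eta(u)=\rho v^2/2+\rho^\gamma/(\gamma-1)$ together with the standard explicit entropy flux. All of these can be quoted from Serre's book. Items (f)--(k) concern the geometry of shock curves and the monotonicity of $\eta(\cdot|S^i_{\cdot}(s))$ along them; these were verified for isentropic Euler in \cite{Leger2011} and may simply be cited.

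\textbf{Bounded speeds (e).} By definition of $\Nuo$ in \eqref{InvReg}, the Riemann invariants satisfy $|w_1|,|w_2|\leq C$. Since $v=(w_1+w_2)/2$ and $c_1\rho^{(\gamma-1)/2}=(w_2-w_1)/2$, and the sound speed $c=\sqrt{\gamma}\,\rho^{(\gamma-1)/2}$ is a fixed multiple of $c_1\rho^{(\gamma-1)/2}$, both $|v|$ and $c$ are bounded by a constant depending only on $C$ and $\gamma$. Hence $|\lambda_i(u)|=|v\pm c|\leq L$ for some $L=L(C,\gamma)$.

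\textbf{The core computation: item (d).} I would first compute the Jacobian in the conservative variables $u=(u_1,u_2)=(\rho,\rho v)$,
\[
f'(u)=\begin{pmatrix} 0 & 1 \\ -v^2+\gamma\rho^{\gamma-1} & 2v \end{pmatrix},
\]
and observe by direct inspection that the left eigenvectors of $f'(b)$ are, up to a scalar multiple,
\[
\ell^{(1)}=(-\lambda_2(b),1), \qquad \ell^{(2)}=(-\lambda_1(b),1).
\]
Fixing $b\in\Nu$ and writing $\mu:=\lambda_i(b)\in\mathbb{R}$, the function to examine is
\[
g(u_1,u_2) := \ell\cdot f(u) = \frac{u_2^2}{u_1}+u_1^{\gamma}-\mu\, u_2.
\]
The only nontrivial check is to show $g$ is convex (or concave) on $\Nu$; since the $\mu$ dependence is linear in $u$, it does not affect the Hessian, so the same convexity will hold for both eigenvectors.

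\textbf{Hessian computation.} I would compute the Hessian of $g$: the diagonal entries are $2u_2^2/u_1^3+\gamma(\gamma-1)u_1^{\gamma-2}$ and $2/u_1$, and the off-diagonal entry is $-2u_2/u_1^2$. The determinant then simplifies, after the cross terms $4u_2^2/u_1^4$ cancel, to
\[
\det = \frac{2}{u_1}\cdot\gamma(\gamma-1)u_1^{\gamma-2} = 2\gamma(\gamma-1)u_1^{\gamma-3},
\]
which is strictly positive on $\Nu$ for $\gamma>1$; together with the positive $(2,2)$ entry $2/u_1$, this shows the Hessian is positive definite, so $g$ is strictly convex on $\Nu$. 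This verifies (d). There is essentially no obstacle: the only observation that makes the calculation short is that the $\mu\,u_2$ term is linear in $u$ and therefore invisible at the level of the Hessian, reducing (d) to a one-line positivity check on a $2\times 2$ matrix.
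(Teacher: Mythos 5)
Your proposal is correct and follows essentially the same route as the paper: (a)--(c) and (f)--(k) by citation, (e) from the invariant-region bounds on $|v|$ and $\rho^{(\gamma-1)/2}$, and (d) from the convexity of $f_2(u)=u_2^2/u_1+u_1^\gamma$ together with the linearity of the remaining terms. The only cosmetic difference is that the paper bypasses the eigenvector and Hessian computation by noting that $f_1$ is linear and $f_2$ is convex, so $\ell\cdot f=\ell_1 f_1+\ell_2 f_2$ is convex or concave according to the sign of $\ell_2$ for \emph{any} vector $\ell$; your explicit Hessian check of $\ell^{(i)}\cdot f$ amounts to the same computation.
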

\begin{proof}
The properties  \ref{assum} (a), (b), (c) are well known (see \cite{Serre_Book} for instance). Denote the flux function
$
f(\rho,\rho v)=(\rho v, \rho v^2+\rho^\gamma), 
$ 
and   the conservative quantities $(u_1,u_2)=(\rho, \rho v)$. Then 
$$
f_1(u_1,u_2)=u_2, \qquad f_2(u_1,u_2)=\frac{u_2^2}{u_1}+ u_1^\gamma.
$$
Note that $f_1$ is linear in $u$, and $f_2$ is convex. Hence, for any vector $\ell$ (left eigenvector of $f'(v)$ or not):
$$
\ell\cdot f=\ell_1 f_1+\ell_2 f_2
$$
is convex if $\ell_2\geq0$, and concave if $\ell_2\leq 0$. This proves \ref{assum} (d). 
\vskip0.3cm
The eigenvalues of $f'$ are given by the formula $\lambda_{\pm}=v\pm \sqrt{\gamma\rho^{\gamma-1}}$ (see  \cite{Serre_Book}). But from \eqref{InvReg},
$|v|\leq C$, and  $c_1 \rho^{(\gamma-1)/2}\leq 2C$. This shows  \ref{assum} (e). 


\end{proof}




\section{Modified front tracking algorithm}\label{section:front}

In the proof of Proposition \ref{main_prop}, the function $\psi$ will be defined through a modification of the  front tracking algorithm. For an excellent introduction to the front tracking algorithm, we refer the reader to Chapter 14 of Dafermos's  book \cite{dafermos_big_book} and also the succinct paper of Baiti-Jenssen \cite{MR1492096}. These two references, together, make an excellent introduction.

For completeness, we include here a brief description of the front tracking algorithm as we use it. In this paper, we do not make use of any of the convergence properties of the front tracking algorithm or related analysis. We use instead the fact that the algorithm gives a sequence of functions with uniformly bounded total variation. 

 For the construction of the $\psi$ we are about to give, the modification to the front tracking algorithm (as presented in Baiti-Jenssen \cite{MR1492096}) consists  in 
changing the velocity of the shocks.  The shocks move with an artificial velocity dictated by the shift functions of Proposition \ref{shift_existence_prop}, instead of moving with the Rankine-Hugoniot speed. 
Thus, performing analysis on our version of the front tracking algorithm is nearly identical to performing analysis on the front tracking algorithm when shocks move with Rankine-Hugoniot speed.

\vskip0.1cm
We now give the details of the construction of $\psi$, following Baiti-Jenssen \cite{MR1492096}. The main idea is to take a piecewise-constant approximation of the initial data, solve (approximately) all of the local Riemann problems within the class of piecewise-constant functions, until a time when two of the Riemann problems interact. Then, the procedure is repeated: the local Riemann problems are again solved, etc. 

The key point is to show that the number of wave-fronts (i.e., curves of discontinuity in time-space) remains finite, so this inductive process does not terminate in finite time. This is done by using two different Riemann solvers: an accurate Riemann solver is used to continue the solution in time after the interaction of two wavefronts when the product of the two strengths of the wavefronts is large. With the accurate solver, the number of wave fronts in the solution might increase. On other hand, when the product of the two strengths of the colliding wavefronts is small, a simplified Riemann solver is used which will prevent an explosion in the number of wavefronts. The key is that the accurate Riemann solver will only need to be used a small number of times, keeping the number of wavefronts in our solution finite. 

Recall that given a Riemann problem with two constant states $u_-$ and $u_+$ sufficiently close, a solution with at most three constant states, connected by either shocks or rarefaction fans, can always be found. More precisely, there exist $C^2$ curves $\sigma\mapsto T_i(\sigma)(u_-)$, $i=1,2$, parametrized  by arclength, such that
\begin{align}\label{RP_curves}
u_+=T_2(\sigma_2)\circ T_1(\sigma_1)(u_-),
\end{align}
for some $\sigma_1$ and $\sigma_2$. We define $u_0\coloneqq u_-$ and 
\begin{align}
u_1\coloneqq T_1(\sigma_1)(u_0),\\
u_2\coloneqq T_2(\sigma_2)\circ T_1(\sigma_1)(u_0).
\end{align}

We use the convention that, when $\sigma_i$ is positive (negative) the states $u_{i-1}$ and $u_i$ are separated by an i-shock (i-rarefaction) wave. Further,  the strength of the i-wave is defined as $\abs{\sigma_i}$. 

For given initial data $u^0$, let $\psi_\nu^0$ be a sequence of piecewise-constant functions approximating $u^0$ in $L^2$ on $(-R,R)$. (We will choose $\nu$ later such as to give us the required $\psi=\psi_\nu$.) Let $N_\nu$ be the number of discontinuities in the function $\psi_\nu$ and choose a parameter $\delta_\nu$ controlling the maximum strength of the (approximate) rarefaction fronts.

We now introduce the two Riemann solvers. One will be used when the product of the strengths of the colliding waves is large, the other will be used when the product of the strengths is small or one of the incoming waves is non-physical (also known as a pseudoshock).

\subsection{The Riemann solvers}
The Riemann solvers will use \emph{non-physical waves} (also known as \emph{pseudoshocks}). These are waves connecting two states (let's call them $u_-$ and $u_+$), and traveling with a fixed velocity $\hat{\lambda}>0$ defined in Proposition \ref{shift_existence_prop}. Therefore, it is greater than all characteristic speeds on $\mathcal{V}$ and greater than the speed of the shifts (which have a uniform bound on their speeds). We define this non-physical wave to have strength $\abs{\sigma}\coloneqq \abs{u_--u_+}$ and we say it belongs to the third wave family. Remark that since all non-physical waves travel with the same speed $\hat{\lambda}$, they cannot interact with each other.

Assume that at a positive time $\bar{t}$, there is an interaction at the point $\bar{x}$ between two waves of families $i_\alpha,i_\beta$ and strengths $\sigma_\alpha',\sigma_\beta'$, respectively, with $1\leq i_\alpha,i_\beta \leq 3$. Let $\sigma_\alpha'$ denote the left incoming wave. Let $u_-$, $u_+$ be the Riemann problem generated by the interaction, and let $\sigma_1,\sigma_2$ and $u_0,u_1,u_2$ be defined as in \eqref{RP_curves}. Finally, we can now define the accurate and simplified Riemann solvers.

(A) \emph{Accurate solver}: If $\sigma_i<0$ then we let 
\begin{align}\label{control_rarefaction_932020}
p_i\coloneqq \left \lceil{\sigma_i/\delta_\nu}\right \rceil ,
\end{align}
where $\left \lceil{s}\right \rceil$ denotes the smallest integer number greater than $s$. For $l=1,\ldots,p_i$ we define
\begin{align}
u_{i,l}\coloneqq T_i(l\sigma_i/p_i)(u_{i-1}),\hspace{.7in} x_{i,l}(t)\coloneqq \bar{x}+(t-\bar{t})\lambda_i(u_{i,l}).
\end{align}
On the other hand, if $\sigma_i>0$, we define $p_i\coloneqq 1$ and 
\begin{align}
u_{i,l}\coloneqq u_i ,\hspace{.7in} x_{i,l}(t)\coloneqq h_i(t).
\end{align}
Here, $h_i$ is the shift function coming from \Cref{shift_existence_prop}. Within the context of \Cref{shift_existence_prop}, we take $u_L=u_{i-1}$ and $u_R=u_i$. 

Then, we define the approximate solution to the Riemann problem as follows:

\begin{align}\label{accurate_RP_sol}
v_a(t,x)\coloneqq
\begin{cases}
u_-, &\mbox{ if } x < x_{1,1}(t),\\
u_+ ,&\mbox{ if } x>x_{2,p_2}(t),\\
u_i,&\mbox{ if } x_{i,p_i}(t)<x<x_{i+1,1}(t),\\
u_{i,l},&\mbox{ if } x_{i,l}(t)<x<x_{i,l+1}(t)\hspace{.3in}(l=1,\ldots,p_i-1).
\end{cases}
\end{align}
Note that thanks to the two last properties of Proposition \ref{shift_existence_prop}, we have : $x_{i,p_i}(t)<x_{i+1,1}(t)$ for all $t>0$,  so the function is well defined.

(B) \emph{Simplified solver}: for each $i=1,2$ let $\sigma_i''$ be the sum of the strengths of the strengths of all incoming i-waves. Define
\begin{align}
u'\coloneqq T_2(\sigma_2'')\circ T_1(\sigma_1'')(u_-).
\end{align}

Let $v_a(t,x)$ be the approximate solution of the Riemann problem $(u_-,u')$ given by \eqref{accurate_RP_sol}. Remark that in general $u'\neq u_+$ and thus we are introducing a non-physical front between these states. Hence, we define the simplified solution as follows:
\begin{align}\label{simple_RP_sol}
v_s(t,x)\coloneqq
\begin{cases}
v_a(t,x), &\mbox{ if } x-\bar{x} < \hat{\lambda}(t-\bar{t}),\\
u_+ ,&\mbox{ if } x-\bar{x} > \hat{\lambda}(t-\bar{t}).
\end{cases}
\end{align}

Notice that by construction, the simplifed solution to the Riemann problem contains at most two physical waves and an additional non-physical wave. Thus, by strategically employing the simplified solver for small collisions, we can prevent an explosion in the number of wavefronts.

\subsection{Construction of the approximate solutions}
Given $\nu$ we construct the approximate solution $\psi_\nu(t,x)$ as follows. At time $t=0$ all of the Riemann problems in $\psi_\nu^0$ are solved accurately as in (A) (the accurate solver). By slightly perturbing the speed of a wave if necessary, we can ensure that at each time we have at most one collision, which will involve only two wavefronts. Suppose that at some time $t>0$ there is a collision between two waves from the $i_\alpha$th and $i_\beta$th families. Denote the strengths of the two waves by $\sigma_\alpha$ and $\sigma_\beta$, respectively. The Riemann problem generated by this interaction is solved as follows. Let $\epsilon_\nu$ be a fixed small parameter which will be chosen later. 
\begin{itemize}
\item if $\abs{\sigma_\alpha\sigma_\beta}>\epsilon_\nu$ and the two waves are physical, then we use the accurate solver (A);
\item if $\abs{\sigma_\alpha\sigma_\beta}<\epsilon_\nu$ and the two waves are physical, or one wave is non-physical, then we use the simplified solver (B).
\end{itemize}

By the following Lemma, for any $\epsilon_\nu$ this algorithm will yield an approximate solution defined for all times $t>0$.

\begin{lemma}[from \protect{\cite[Lemma 2.1]{MR1492096}}]
The number of wavefronts in $\psi_\nu(t,x)$ is finite. Hence, the approximate solutions $\psi_\nu$ are defined for all $t>0$. 
\end{lemma}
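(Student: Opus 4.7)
The plan is to carry out the classical Glimm-type interaction-potential argument of Baiti-Jenssen, adapted to the fact that in our modified algorithm shock fronts propagate at the shift speeds provided by \Cref{shift_existence_prop} rather than at their Rankine-Hugoniot speeds. I would begin by introducing the standard total wave strength $V(t) := \sum_\alpha |\sigma_\alpha|$, summed over the physical fronts of $\psi_\nu(t,\cdot)$, together with the interaction potential
\[
Q(t) := \sum_{(\alpha,\beta) \text{ approaching}} |\sigma_\alpha \sigma_\beta|,
\]
using the usual family-based definition of approaching pairs. The structural input from \Cref{shift_existence_prop} is that the shift velocities satisfy $\dot h(t) \leq \alpha_1 < \inf_{v \in B_{2\eps}(d)} \lambda_2(v)$ for $1$-shocks and $\dot h(t) \geq \alpha_2 > \sup_{v \in B_{2\eps}(d)} \lambda_1(v)$ for $2$-shocks, while non-physical fronts all move with the common speed $\hat\lambda \geq 2L$. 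Consequently, two fronts emerging from a single Riemann problem in the orderings $1$-front before $2$-front, or physical before non-physical, or two non-physical, can never collide. The approaching/non-approaching classification therefore depends only on family and spatial position, and the classical cubic Glimm estimates
\[
V(t{+}) - V(t{-}) \leq C_0 |\sigma_\alpha \sigma_\beta|, \qquad Q(t{+}) - Q(t{-}) \leq -\tfrac12|\sigma_\alpha \sigma_\beta|
\]
hold at every binary physical collision. This yields the standard conclusion that $V + C_1 Q$ is non-increasing in time provided $V$ is small enough at the outset; this smallness is secured by the bound $\|u^0\|_{BV} \leq \eps$.

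Next I would bound the number of uses of the accurate solver. Since that solver is triggered only when $|\sigma_\alpha \sigma_\beta| > \epsilon_\nu$, each of its uses drops $Q$ by at least $\epsilon_\nu/2$, and combined with the uniform bound on $Q$ coming from the monotonicity of $V + C_1 Q$, this yields an a priori bound $N_a \leq C \eps^2 / \epsilon_\nu$. Each accurate solver use creates at most $2 + V(0)/\delta_\nu$ new fronts, the extra fronts coming from the rarefaction subdivision \eqref{control_rarefaction_932020}. Each simplified solver use, by \eqref{simple_RP_sol}, produces at most two physical and at most one non-physical front out of its two incoming fronts, so it increases the physical-front count by at most $1$ and the non-physical count by at most $1$. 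Here the fact that no two non-physical fronts ever meet is crucial: non-physical fronts cannot cascade into new ones among themselves, and each has a finite lifetime terminated by a collision with a physical front. Combined, these bookkeeping observations yield a uniform bound $N^* = N^*(N_\nu, \epsilon_\nu, \delta_\nu, V(0))$ on the total number of fronts present at any time.

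With this uniform bound $N^*$ in hand, finiteness of the algorithm for all $t > 0$ follows from the standard observation that with finitely many fronts moving at piecewise-constant speeds, collision times are isolated; the induction therefore proceeds past any finite $T$, and since $N^*$ does not depend on $T$, the construction of $\psi_\nu$ extends to all positive times. The principal obstacle, relative to the classical Baiti-Jenssen setting, is the first step above: verifying that the Glimm estimates remain valid when physical shocks travel at artificial shift velocities instead of at Rankine-Hugoniot speeds. This is resolved precisely by the velocity bounds $\dot h \leq \alpha_1$ and $\dot h \geq \alpha_2$ from \Cref{shift_existence_prop}, which preserve the family-based ordering required to define the ``approaching'' relation entering $Q$ and rule out spurious crossings of fronts born from the same Riemann problem.
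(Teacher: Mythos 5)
Your proposal is correct and takes essentially the same route as the paper, which simply cites Baiti--Jenssen's Lemma 2.1 and observes that the proof is unchanged when shocks move with the shift velocities of Proposition \ref{shift_existence_prop}, precisely because the speed bounds ($\dot h\leq\alpha_1$ for 1-shocks, $\dot h\geq\alpha_2$ for 2-shocks, $\hat\lambda$ for non-physical fronts) preserve the family-wise ordering underlying the interaction-potential bookkeeping. Two small slips in your accounting do not affect the conclusion but are worth fixing: the simplified solver never increases the number of physical fronts (this, rather than ``increases it by at most $1$,'' is what closes the count, since the number of simplified interactions is not bounded a priori), and non-physical fronts are not terminated by collisions with physical fronts---they persist forever at speed $\hat\lambda$; the relevant point is only that they never interact with one another and never generate new fronts.
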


This Lemma is stated and proved in \cite[Lemma 2.1]{MR1492096} for piecewise constant front tracking solutions where shocks move according to Rankine-Hugoniot. We do not repeat the proof here, because using shifts in the front tracking algorithm (instead of Rankine-Hugoniot speeds) does not impact the proof. The proof is identical.
\vskip0.3cm

We introduce the total variation of $\psi_\nu$ as 
\[
L(t)=\sum |\sigma_i|=\hbox{TV}(\psi_\nu)(t),
\]
namely the sum of the strengths of all jump discontinuities that cross the $t$-time line. Clearly, $L(t)$ stays constant along time intervals between consecutive collisions of fronts and changes only across points of wave interaction. 

A $j$-wave and an $i$-wave, with the former crossing the $t$-time line to the left of the latter, are called {\it approaching} when either $i<j$, or $i=j$ and at least one of these waves is a shock.
We recall then the definition of the  potential for wave interactions
\[
Q(t)=\sum_{i,j:\hbox{approaching waves}}|\sigma_i| |\sigma_j|,
\]
where the summation runs over all paires of approaching waves, with strengths $|\sigma_i|$ and $|\sigma_j|$, which cross the $t$-line. 
Let us summarize some well known fact of the front tracking method which are still valid in our situation. 
\begin{proposition}\label{prop:delta}
There exists $\kappa>0$ such that for any $\eps$ small enough, the following is true.

The functional $L(t)+\kappa Q(t)$ is decreasing in time. Moreover, for any time $t$ where waves with strength $|\sigma_i|$ and $|\sigma_j|$ interact the jump of $Q$ at this time verifies
$$
\Delta L(t)+\kappa \Delta Q(t)\leq -\frac{\kappa}{2}|\sigma_i| |\sigma_j|.
$$
Especially, there exists a constant $C>0$, such that  for every $\nu>0$, $T>0$:
\begin{eqnarray*}
&&\|\psi_\nu\|_{L^\infty(0,T, BV(\RR))}\leq 2\eps,\\
&&\norm{\psi_\nu(t,\cdot)-\psi_\nu(s,\cdot)}_{L^1}\leq C\abs{t-s},\qquad 0<s<t<T,\\
&&  \text{The function } \psi_\nu \text{ verifies the Condition \ref{H} with constant  } C.
\end{eqnarray*}
\end{proposition}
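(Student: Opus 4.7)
The plan is to follow the classical Glimm--Bressan front-tracking analysis essentially verbatim, exploiting the fact that every interaction estimate is purely algebraic and depends only on the geometry of the wave curves $T_i$, not on the speeds at which the fronts propagate. Replacing Rankine--Hugoniot speeds by shift speeds merely reparametrizes the front trajectories $t\mapsto x_i(t)$; the bounds $|\dot h|\leq \hat\lambda/2$ together with the ordering guarantees from \Cref{shift_existence_prop} ensure that the shift of a 1-shock cannot overtake the shift of a 2-shock starting on its right and that every physical front moves strictly slower than the non-physical speed $\hat\lambda$. With these qualitative properties the counting argument of \cite{MR1492096} that keeps the number of fronts finite goes through unchanged.

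The first step is to record the standard interaction estimate at a collision of two approaching fronts of strengths $\sigma_\alpha,\sigma_\beta$. For either the accurate solver (A) or the simplified solver (B), the sum of outgoing strengths satisfies
\beq
\sum_i |\sigma_i'|\;\leq\;|\sigma_\alpha|+|\sigma_\beta|+C_0|\sigma_\alpha\sigma_\beta|,
\eeq
and the interaction potential changes by at most
\beq
\Delta Q(t)\;\leq\;-|\sigma_\alpha\sigma_\beta|+C_0|\sigma_\alpha\sigma_\beta|\,L(t^-),
\eeq
the extra term accounting for pairings of the outgoing fronts with the fronts that do not take part in the collision. These estimates are classical (see \cite{Bbook,dafermos_big_book}) and depend only on the $C^2$ regularity of the wave curves. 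Combined, they give
\beq
\Delta L(t)+\kappa\Delta Q(t)\;\leq\;\bigl(C_0+\kappa C_0 L(t^-)-\kappa\bigr)|\sigma_\alpha\sigma_\beta|.
\eeq
One chooses $\kappa>4C_0$ and then $\eps$ so small that $2 C_0\eps<1/4$. As long as $L(t)\leq 2\eps$, the right-hand side is bounded by $-\tfrac{\kappa}{2}|\sigma_\alpha\sigma_\beta|$; integrating gives $L(t)\leq L(0)+\kappa Q(0)\leq \eps+\tfrac{\kappa}{2}\eps^2\leq 2\eps$, closing the continuation argument. The time-Lipschitz $L^1$ bound is then immediate: between consecutive collisions every front of strength $|\sigma_k|$ contributes at most $|\sigma_k|\,\hat\lambda\,|t-s|$ to $\|\psi_\nu(t,\cdot)-\psi_\nu(s,\cdot)\|_{L^1}$, and $\psi_\nu$ is $L^1$-continuous across each individual collision; summation and the uniform bound on $L$ yield the desired constant.

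The main obstacle, and the only step that genuinely uses the space-like structure, is \Cref{H}. For a piecewise constant $\psi_\nu$ the quantity $\mathrm{Tot.Var.}\{\psi_\nu;\gamma\}$ is exactly the sum of strengths of the fronts that cross the curve $\gamma$. If $\gamma$ dominates $\gamma'$ in the sense of \Cref{domination_def}, then the region $\Omega=\{(t,x):\gamma(x)\leq t\leq \gamma'(x)\}$ lies inside the domain of determinacy of $\gamma$, because every front speed is bounded by $\hat\lambda$ while every space-like curve has slope strictly less than $1/\hat\lambda$. Consequently each front crossing $\gamma'$ either already crossed $\gamma$ or was created by an interaction inside $\Omega$ whose two parents (traced backward) crossed $\gamma$. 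Summing the interaction estimate over all collisions in $\Omega$ yields
\beq
\mathrm{Tot.Var.}\{\psi_\nu;\gamma'\}\;\leq\;\mathrm{Tot.Var.}\{\psi_\nu;\gamma\}+C_0\,Q(\gamma)\;\leq\;(1+C_0\eps)\,\mathrm{Tot.Var.}\{\psi_\nu;\gamma\},
\eeq
which is \Cref{H} with $C=1+C_0\eps$. The one point where the replacement of Rankine--Hugoniot speeds by shift speeds must really be checked is the inclusion $\Omega\subset$ domain of determinacy of $\gamma$; this reduces directly to $|\dot h|\leq \hat\lambda/2$ from \Cref{shift_existence_prop}. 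Taking $C$ as the largest of the constants produced in the three bounds completes the proof.
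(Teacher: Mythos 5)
Your proposal is correct and follows essentially the same route as the paper: both reduce everything to the classical Glimm--Bressan interaction estimates, noting that $L$, $Q$ and the interaction rules depend only on the Riemann solvers and not on front speeds, with the speed separation and the bound $\hat{\lambda}$ from Proposition \ref{shift_existence_prop} guaranteeing that the usual ordering of fronts (and hence the decrease of $L+\kappa Q$, the $L^1$ time-Lipschitz bound, and the Condition \ref{H} argument via domains of determinacy as in \cite[Lemma 7.3]{Bbook}) carries over unchanged. You merely spell out the standard estimates that the paper handles by citation.
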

\begin{proof}
The definitions  of the functional $Q$ and $L$ do not depend on the propagation speed of the waves, as long as one  verifies the rules that only approaching waves can interact in the future, and (after interaction) interacting waves will not be approaching anymore in the future. These two rules are still valid, thanks to the separation of wave speeds by families in Proposition \ref{shift_existence_prop}. Therefore, the evolution rule of $L(t)$ and $Q(t)$ after each  collision depends only on the Riemann solvers which are identical to the real front tracking algorithm. So we recover the estimates involving $\Delta Q(t)$ and $\Delta L(t)$ in the exact same way as the original front tracking method (see \cite{Bbook}).
\vskip0.3cm
This estimate shows that $L(t)+\kappa Q(t)$ is a non increasing function in time. We have also that $Q(t)\leq \eps L(t)$. So for every time $t>0$
$$
L(t)\leq L(t)+\kappa Q(t)\leq L(0)+\kappa Q(0)\leq (1+\kappa \eps)\eps,
$$
which provides the uniform $BV$ bound for $\eps$ small enough.
\vskip0.3cm
As in \cite[(7.79)]{Bbook}, we have that   
\begin{eqnarray*}
&&\norm{\psi_\nu(t,\cdot)-\psi_\nu(s,\cdot)}_{L^1}\leq \mathcal{O}(1) |t-s| \left(\sup_{\tau\in \R^+} L(\tau) \right)[\text{maximum speed}]\\
&&\qquad\qquad \leq C|t-s|,
\end{eqnarray*}
since the maximum speed is uniformly bounded by $\hat{\lambda}$ defined in Proposition \ref{shift_existence_prop}.
\vskip0.3cm
The proof of the last statement is identical to  \cite[Lemma 7.3]{Bbook}, since it depends only on the interaction rules, and on the finite speed of propagation. 

\end{proof}

\vskip0.3cm
For every time $r>0$, we denote by $\mathcal{P}(r)$ the set of $i$ corresponding to non-physical waves. For the same reasons, the following lemma is unchanged from {\cite[Lemma 3.1]{MR1492096}}.


\begin{lemma}[from \protect{\cite[Lemma 3.1]{MR1492096}}]
\label{control_nonphys_932020}
If 
\begin{align}\label{want_942020}
\lim_{\nu\to\infty} \epsilon_\nu \Bigg(N_\nu+\frac{1}{\delta_\nu}\Bigg)^k=0,
\end{align}
for every positive integer $k$, then the total strength of non-physical waves in $\psi_\nu$ goes to zero uniformly in $t$ as $\nu\to\infty$:
$$
\sup_{r\in[0,T]}\sum_{i\in \mathcal{P}(r)} |\sigma_i|\to 0, \text{ when } \nu \to 0.
$$
\end{lemma}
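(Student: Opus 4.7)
The plan is to essentially reproduce the proof of \cite[Lemma 3.1]{MR1492096}, noting that the only difference between our modified front-tracking algorithm and the classical one is that shock fronts travel with the shifted velocities of Proposition \ref{shift_existence_prop} instead of Rankine--Hugoniot speeds. All quantities entering the combinatorics---which pairs of fronts are approaching, which solver is invoked at an interaction, the strengths of the outgoing waves, and the monotonicity of $L+\kappa Q$ given by Proposition \ref{prop:delta}---depend only on the Riemann solvers and not on how fronts propagate between collisions. Thus the Baiti--Jenssen estimates carry over unchanged.

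The central device is the \emph{generation order} assigned to each front. Fronts present just after $t=0$, produced by accurately solving the Riemann problem at each jump of $\psi_\nu^0$, are declared of order $1$. When two fronts of orders $k_\alpha,k_\beta$ collide, outgoing physical fronts inherit $\max(k_\alpha,k_\beta)$, while a non-physical front produced by the simplified solver is assigned order $\max(k_\alpha,k_\beta)+1$. Since new fronts can be created only at accurate collisions and the number of such collisions is controlled by the initial potential $Q(0)=\mathcal{O}((N_\nu/\delta_\nu)^2)$, the total number of fronts present at any time $r\in[0,T]$ is bounded by a polynomial $P(N_\nu,1/\delta_\nu)$.

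The key claim, proved by induction on $k$, is
\begin{equation*}
\sup_{r\in[0,T]}\,\sum_{\substack{i\in\mathcal{P}(r)\\ \mathrm{ord}(i)=k}}\!\!|\sigma_i|\ \leq\ C_k\,\epsilon_\nu\,\bigl(N_\nu+1/\delta_\nu\bigr)^{q_k},
\end{equation*}
with $C_k,q_k$ independent of $\nu$. For the base case $k=2$, a generation-$2$ non-physical front arises only when two physical fronts of generation $1$ interact with $|\sigma_\alpha\sigma_\beta|<\epsilon_\nu$, producing a non-physical front of strength $\mathcal{O}(\epsilon_\nu)$; summing over at most $P(N_\nu,1/\delta_\nu)^2$ such interactions yields the bound. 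For the inductive step, a non-physical front of generation $k-1$ interacting with a physical front triggers the simplified solver, producing an outgoing non-physical front of strength at most $|\sigma_{\mathrm{n.p.}}|(1+\mathcal{O}(\varepsilon))+\mathcal{O}(|\sigma_\alpha\sigma_\beta|)$; summing such interactions and using the inductive hypothesis gives the estimate at level $k$.

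Finally, the maximum generation reached by any front is trivially bounded by the total number of fronts ever produced, which is again polynomial in $N_\nu+1/\delta_\nu$. Summing the per-generation bound over all generations and invoking the hypothesis \eqref{want_942020}, which forces $\epsilon_\nu(N_\nu+1/\delta_\nu)^k\to 0$ for every positive integer $k$, yields the uniform vanishing
\[
\sup_{r\in[0,T]}\sum_{i\in\mathcal{P}(r)}|\sigma_i|\longrightarrow 0,\qquad\nu\to\infty.
\]
The one point that needs checking, rather than a real obstacle, is that the generation-order bookkeeping of Baiti--Jenssen is insensitive to the replacement of Rankine--Hugoniot speeds by the shifts of Proposition \ref{shift_existence_prop}; this is precisely what Proposition \ref{prop:delta} guarantees, since the order assignment depends only on interaction outputs and the interaction outputs are identical in the two algorithms.
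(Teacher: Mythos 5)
First, note what the paper itself does here: it gives no proof at all, but cites \cite[Lemma 3.1]{MR1492096} and only argues that replacing Rankine--Hugoniot speeds by the shifts does not affect the argument, since the interaction bookkeeping depends only on the Riemann solvers. Your closing observation (via Proposition \ref{prop:delta} and the speed separation in Proposition \ref{shift_existence_prop}) captures exactly that point. The problem is with your attempted reconstruction of the Baiti--Jenssen proof, which has a genuine gap. Your structural claim that the total number of fronts at any time is bounded by a polynomial $P(N_\nu,1/\delta_\nu)$, justified by saying the number of accurate collisions is ``controlled by $Q(0)$'', is not correct: at an accurate collision the only available lower bound on the decay of the potential is $\Delta(L+\kappa Q)\leq -\tfrac{\kappa}{2}\abs{\sigma_\alpha\sigma_\beta}\leq-\tfrac{\kappa}{2}\epsilon_\nu$, so the number of accurate interactions (hence the total number of physical fronts, hence also the maximal generation order) is only bounded by a quantity of size $Q(0)/\epsilon_\nu$, which blows up as $\epsilon_\nu\to 0$. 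What is actually true, and what the cited proof establishes by induction, is that the number of fronts of each \emph{fixed} generation $\leq k$ is bounded by a polynomial in $N_\nu+1/\delta_\nu$ whose degree and constant depend on $k$ but not on $\epsilon_\nu$.

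This invalidates your final step: you sum the per-generation bounds $C_k\,\epsilon_\nu(N_\nu+1/\delta_\nu)^{q_k}$ over all generations up to the maximal one, but that maximum grows with $\nu$ (and with $1/\epsilon_\nu$), while hypothesis \eqref{want_942020} gives $\epsilon_\nu(N_\nu+1/\delta_\nu)^k\to 0$ only for each \emph{fixed} $k$; since $C_k$ and $q_k$ grow with $k$, a term like $\epsilon_\nu(N_\nu+1/\delta_\nu)^{q_{K_\nu}}$ with $K_\nu\to\infty$ is not forced to vanish. The missing ingredient is the complementary estimate, uniform in $\nu$, that the total strength of \emph{all} fronts of generation $>k$ tends to $0$ as $k\to\infty$ (the geometric-type decay of high-generation wave strength coming from the interaction estimates and the smallness of the total variation). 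With it the argument closes by a two-parameter limit: non-physical fronts of generation $\leq k$ contribute at most $C\epsilon_\nu\,P_k(N_\nu+1/\delta_\nu)$, those of generation $>k$ are absorbed in the high-generation tail; one lets $\nu\to\infty$ at fixed $k$, then $k\to\infty$. Without that tail estimate (or some substitute decoupling the front count from $\epsilon_\nu$), your proof does not go through, although your main point --- that none of this bookkeeping is sensitive to the artificial shift velocities --- is correct and is precisely what the paper relies on when citing \cite{MR1492096}.
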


\Cref{control_nonphys_932020} is based on \cite[Lemma 3.1]{MR1492096}, where the result is given for piecewise-constant front tracking solutions with shocks moving with Rankine-Hugoniot speed. A proof is also provided in \cite{MR1492096}. The proof of our \Cref{control_nonphys_932020}, where shocks move according to shift functions, is identical.

\section{The weight function $a$}\label{sec:Geng}

For any pairwise interaction between two small (shock or rarefaction) waves, one has the following estimates (see \cite{Bbook, gl, sm}). See Figure \ref{fig1}.

\begin{proposition}\label{prop1}
Call $\sigma', \sigma''$ the strengths of two interacting wave-fronts, and let 
$\sigma_1, \sigma_2$ be the strengths of the outgoing waves  of the first and second family. $\sigma$ takes positive sign on a shock and negative value on a rarefaction front. 

Then there exists a constant $C_0$ (uniformly valid for $u\in B_\eps(d)$) such that
\begin{itemize}
\item If both $\sigma'$ and $\sigma''$ belong to the first family, then
\bel{21}
|\sigma_1-(\sigma'+\sigma'')|+|\sigma_2|~\leq~C_0\,|\sigma'\sigma''|(|\sigma'|+|\sigma''|).\eeq
\item If $\sigma''$ is a 1-wave and $\sigma'$ is a 2-wave, then
\bel{299}
|\sigma_1-\sigma''|+|\sigma_2-\sigma'|~\leq~C_0\,|\sigma'\sigma''|(|\sigma'|+|\sigma''|).\eeq
\item If both $\sigma'$ and $\sigma''$ belong to the second family, then
\bel{22}
|\sigma_1|+|\sigma_2-(\sigma'+\sigma'')|~\leq~C_0\,|\sigma'\sigma''|(|\sigma'|+|\sigma''|).\eeq
\end{itemize}
\end{proposition}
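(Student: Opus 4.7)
The plan is to prove the classical Glimm interaction estimates for $2\times 2$ genuinely nonlinear systems; detailed proofs can be found in \cite{gl,sm,Bbook}, and I will follow that approach.

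Under Assumption~(b), the Lax wave curves $\sigma\mapsto T_i(\sigma)(u)$, obtained by gluing the $i$-rarefaction branch ($\sigma<0$) to the $i$-shock branch ($\sigma>0$), are $C^2$ in $(u,\sigma)$ on a neighborhood of $B_\eps(d)\times\{0\}$, since shock and rarefaction curves through a given state agree through order two under genuine nonlinearity. Consequently the composite map $\Phi(\sigma_1,\sigma_2)\coloneqq T_2(\sigma_2)\circ T_1(\sigma_1)(u_-)$ is $C^2$ with Jacobian $[r_1(u_-)\mid r_2(u_-)]$ at the origin, invertible by strict hyperbolicity, and the outgoing strengths are defined implicitly and smoothly by $\Phi(\sigma_1,\sigma_2)=u_+$.

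For the same-family cases \eqref{21} and \eqref{22}, $u_+=T_i(\sigma'')\circ T_i(\sigma')(u_-)$. Taylor-expanding $T_i$ and matching its first two derivatives at the origin gives the asymptotic semigroup identity
\begin{align*}
T_i(\sigma'')\circ T_i(\sigma')(u_-)=T_i(\sigma'+\sigma'')(u_-)+O\bigl(|\sigma'\sigma''|(|\sigma'|+|\sigma''|)\bigr),
\end{align*}
uniformly on $B_\eps(d)$: both sides carry the same tangent vector $r_i$ and the same second-order correction $Dr_i\cdot r_i$ at the origin of the curve, so the would-be $O(|\sigma'\sigma''|)$ coefficient cancels and only the cubic Taylor remainder survives. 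Inverting $\Phi$ then produces $\sigma_1=\sigma'+\sigma''$, $\sigma_2=0$ up to cubic error for case \eqref{21}, and symmetrically for case \eqref{22}.

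For the transversal case \eqref{299} with $u_+=T_1(\sigma'')\circ T_2(\sigma')(u_-)$, the key observation is that if either incoming strength vanishes then only one wave is present, so the Riemann solver returns it exactly: $(\sigma_1,\sigma_2)(\sigma',0)=(0,\sigma')$ and $(\sigma_1,\sigma_2)(0,\sigma'')=(\sigma'',0)$. The errors $E_1\coloneqq\sigma_1-\sigma''$ and $E_2\coloneqq\sigma_2-\sigma'$ therefore vanish identically on both axes $\{\sigma'=0\}$ and $\{\sigma''=0\}$, and by $C^2$ smoothness $E_j(\sigma',\sigma'')=\sigma'\sigma''H_j(\sigma',\sigma'')$ with $H_j$ continuous. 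The main obstacle is to prove $H_j(0,0)=0$: this is the $2\times 2$-specific refinement of the general Glimm estimate, and I would obtain it by working in Riemann invariant coordinates $(w_1,w_2)$, in which each Lax curve $T_i$ is tangent to $\partial_{w_i}$ and leaves the opposite invariant $w_{3-i}$ fixed up to order three (rarefactions preserve it exactly, and Assumption~(b) forces shocks to agree with rarefactions to order two). Matching $\Phi(\sigma_1,\sigma_2)=T_1(\sigma'')\circ T_2(\sigma')(u_-)$ componentwise in $(w_1,w_2)$ shows that the putative $\sigma'\sigma''$ contributions to $E_j$ match on both sides and cancel, yielding $H_j(0,0)=0$ and the announced cubic bound. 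Uniformity of $C_0$ on $B_\eps(d)$ follows from continuity of all the Taylor coefficients involved.
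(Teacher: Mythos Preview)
The paper does not prove this proposition: it is stated with the parenthetical ``(see \cite{Bbook, gl, sm})'' and then used as a black box. Your proposal supplies precisely the classical argument from those references---$C^2$ regularity of the Lax curves, the second-order semigroup identity for same-family compositions, and the Riemann-invariant computation to kill the $\sigma'\sigma''$ coefficient in the transversal case---so there is nothing to compare against in the paper itself, and your sketch is aligned with the cited literature.

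One minor remark: your factorization $E_j=\sigma'\sigma''H_j$ with $H_j$ merely continuous, together with $H_j(0,0)=0$, does not by itself yield the cubic bound (continuity at the origin only gives $o(|\sigma'\sigma''|)$). To get $O(|\sigma'\sigma''|(|\sigma'|+|\sigma''|))$ you need a modulus of continuity on $H_j$, which in practice comes from working branch-by-branch (each shock or rarefaction branch of $T_i$ is individually $C^3$ under the paper's $f\in C^4$ hypothesis) rather than from the glued $C^2$ curve. This is how the references you cite actually close the estimate, so the gap is only in the presentation, not the idea.
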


Remark that $B_\eps(d)$ is in the context of \Cref{theo_Bressan}.

Here we can always choose $\varepsilon$ small enough, especially smaller than the $\eps$ of Proposition \ref{shift_existence_prop},  and such that 
\beq\label{c0}
C_0\varepsilon\leq 1.
\eeq

\begin{figure}[tb] \centering
		\includegraphics[width=.3\textwidth]{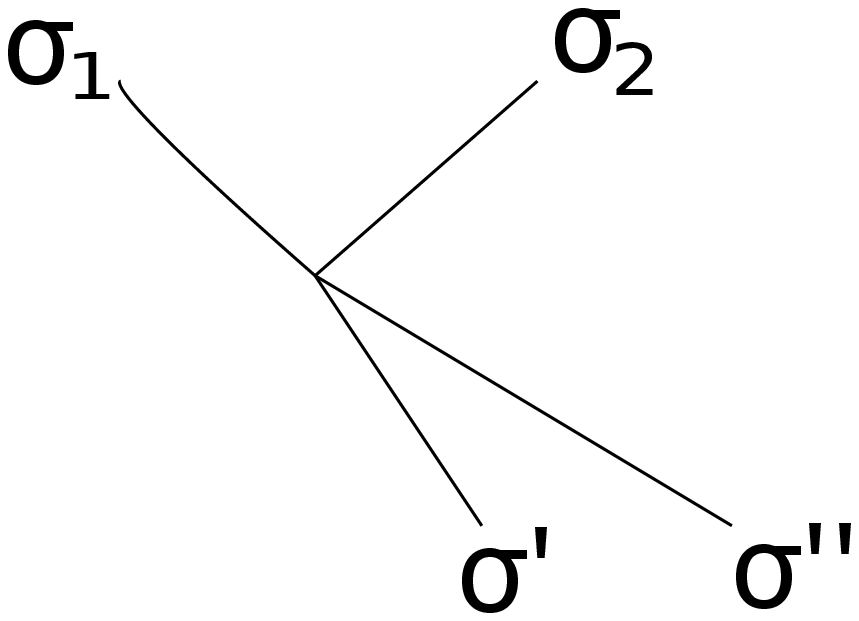}\hfill
		\includegraphics[width=.27\textwidth]{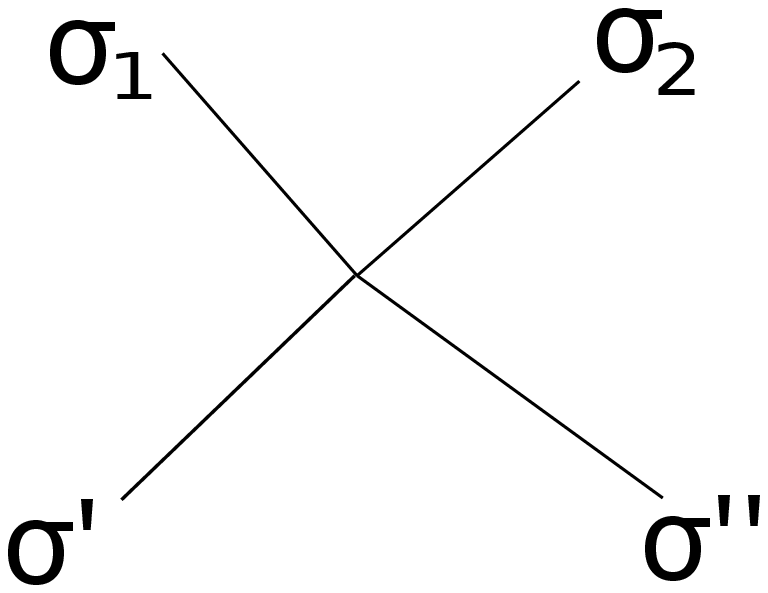}\hfill
		\includegraphics[width=.26\textwidth]{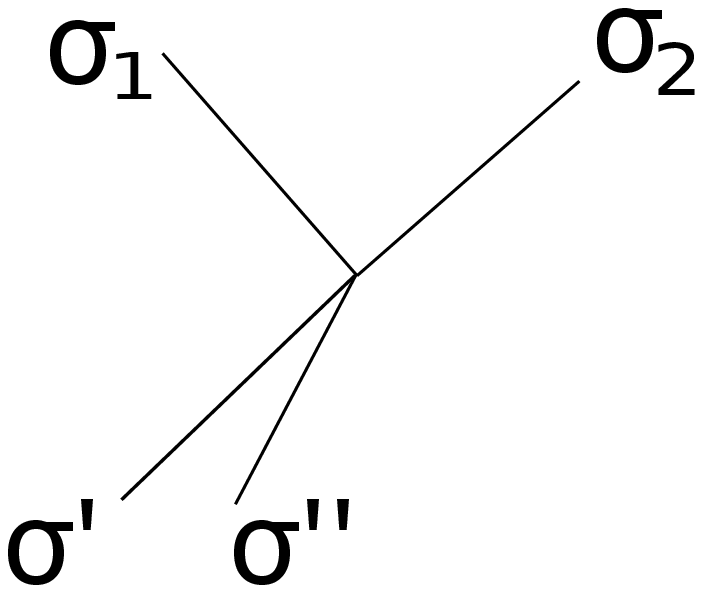}\hfill
		\caption{Pairwise interactions. Each line segment represent a shock or a rarefaction jump. To distinguish from each other, we represent 1-waves by line segment with negative speeds, and 2-waves
		with positive speeds. 
		The middle picture is for the head-on interaction. The other two pictures are for overtaking interactions.\label{fig1}}
	\end{figure}

\vskip0.3cm
We now define the following measure $\mu(t,\cdot)$ as a sum of Dirac measures in $x$:
\begin{eqnarray*}
&&\mu(t,x)= -\sum_{i: 1-\text{shock}} \sigma_i \delta_{\{x_i(t)\}}+\sum_{i: 2-\text{shock}} \sigma_i \delta_{\{x_i(t)\}}\\
&&\qquad = -\sum_{i: 1-\text{wave}} (\sigma_i)_+ \delta_{\{x_i(t)\}}+\sum_{i: 2-\text{wave}} (\sigma_i)_+ \delta_{\{x_i(t)\}}.
\end{eqnarray*}
The weight function is then defined as 
\begin{equation}\label{eq:a}
a(t,x)=1+ C\left(L(t)+\kappa Q(t) +\int_{-\infty}^x \mu(t,z)\,dz\right),
\end{equation}
where the constant $C$ is defined in Proposition \ref{shift_existence_prop}.
\vskip0.3cm
Note that the function $a$ is piecewise constant, with discontinuities only along shock curves. In particular it is constant across rarefaction curves and pseudoshock curves. 
We show that the function $a$ has the following properties.
\begin{proposition}\label{prop:a}
There exists $C_0>0$, such that for every $\eps>0$ small enough,
$$
|a(t,x)-1|\leq C_0\eps.
$$
For every time without wave interaction, and for every $x$ such that a 1-shock  $\sigma_i$ is located at  $x=x_i(t)$: 
\begin{equation}\label{sig1}
1-2C|\sigma_i|\leq\frac{a(t,x_i(t)+)}{a(t,x_i(t)-)}\leq 1-\frac{C}{2}|\sigma_i|,
\end{equation}
\vskip0.1cm
For every time without wave interaction, and for every $x$ such that a 2-shock  $\sigma_i$ is located at  $x=x_i(t)$:
\begin{equation}\label{sig2}
1+\frac{1}{2}C|\sigma_i|\leq\frac{a(t,x_i(t)+)}{a(t,x_i(t)-)}\leq 1+{2}C|\sigma_i|,
\end{equation}
\vskip0.1cm
For every time $t$ with a wave interaction, and almost every $x$:
\begin{equation}\label{eqa2}
a(t+,x)\leq a(t-,x).
\end{equation}
\end{proposition}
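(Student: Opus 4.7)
The plan is to handle each of the four claims in turn, exploiting the explicit formula~\eqref{eq:a} together with the bounds from Proposition~\ref{prop:delta} and the pairwise-interaction estimates of Proposition~\ref{prop1}.

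For $|a-1|\leq C_0\eps$, Proposition~\ref{prop:delta} yields $L(t)\leq 2\eps$, so $Q(t)\leq \eps L(t)\leq 2\eps^2$, and since $\mu(t,\cdot)$ is a signed sum of Dirac masses whose total mass is bounded by $L(t)$, one gets $\bigl|\int_{-\infty}^{x}\mu(t,z)\,dz\bigr|\leq L(t)\leq 2\eps$. Summing these contributions with the constant $C$ in front, and absorbing the factor $\kappa$ in $C_0$, gives the uniform bound; in particular, by choosing $\eps$ small enough we may assume $|a-1|\leq 1/3$, so that $a\in[2/3,4/3]$ everywhere.

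For \eqref{sig1}--\eqref{sig2}, note that between interaction times $L(t)$ and $Q(t)$ are constant in $x$, so all $x$-variation of $a$ comes from $\int_{-\infty}^x\mu$, which only jumps at shock locations. By definition of $\mu$, a 1-shock of strength $|\sigma_i|$ at $x_i(t)$ makes this integral jump by $-|\sigma_i|$; hence $a(t,x_i(t)+)-a(t,x_i(t)-)=-C|\sigma_i|$. Dividing by $a(t,x_i(t)-)\in[2/3,4/3]$,
\[
\frac{a(t,x_i(t)+)}{a(t,x_i(t)-)}=1-\frac{C|\sigma_i|}{a(t,x_i(t)-)}\in\Bigl[1-\tfrac{3}{2}C|\sigma_i|,\,1-\tfrac{3}{4}C|\sigma_i|\Bigr]\subset\bigl[1-2C|\sigma_i|,\,1-\tfrac{C}{2}|\sigma_i|\bigr],
\]
which is~\eqref{sig1}. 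The 2-shock estimate~\eqref{sig2} is identical after flipping the sign of the Dirac contribution in $\mu$.

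The main task is~\eqref{eqa2}. Fix an interaction at time $t$ between two waves of strengths $\sigma',\sigma''$ meeting at $x_0$. For $x<x_0$ the integral $\int_{-\infty}^x\mu$ is unchanged across $t$, so
\[
a(t+,x)-a(t-,x)=C\bigl(\Delta L+\kappa\,\Delta Q\bigr)\leq -\tfrac{C\kappa}{2}\,|\sigma'\sigma''|\leq 0
\]
by Proposition~\ref{prop:delta}. For $x>x_0$ one additionally picks up the change $\Delta_\mu$ from replacing the two incoming contributions to $\mu$ at $x_0$ by those of the outgoing fronts. Applying the appropriate bound among~\eqref{21}, \eqref{299}, \eqref{22} depending on which of the three cases of Figure~\ref{fig1} occurs, and tracking via the positive parts $(\sigma_i)_+$ which outgoing waves are actually shocks (a small rarefaction simply contributing $0$ to $\mu$), one verifies in every case that
\[
|\Delta_\mu|\leq 2C_0\,|\sigma'\sigma''|\bigl(|\sigma'|+|\sigma''|\bigr)\leq 4C_0\eps\,|\sigma'\sigma''|.
\]
Interactions involving a pseudoshock are handled the same way, since the pseudoshock itself contributes nothing to $\mu$ and the Glimm estimate for $\Delta L+\kappa\Delta Q$ is unaffected. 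Combining,
\[
a(t+,x)-a(t-,x)\leq C\Bigl(-\tfrac{\kappa}{2}+4C_0\eps\Bigr)|\sigma'\sigma''|\leq 0,
\]
provided $\eps$ is small enough that $\kappa\geq 8C_0\eps$. The main obstacle is the case-by-case bookkeeping for $\Delta_\mu$: when the families of the outgoing fronts differ from those of the incoming ones (for example a 1-1 shock interaction producing a small 2-shock, or a shock interaction whose outgoing 1-wave is a rarefaction), the positive parts $(\sigma_i)_+$ change family, requiring careful tracking of sign cancellations. In each case, however, the net change is quadratic in the incoming strengths, which is precisely the order absorbed by the strict decrease of the Glimm potential $L+\kappa Q$.
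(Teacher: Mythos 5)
Your treatment of the first three claims is fine and matches the paper: the uniform bound follows from the monotonicity of $L+\kappa Q$ and $\|\mu(t)\|\leq L(t)$, and \eqref{sig1}--\eqref{sig2} follow because the only $x$-jumps of $a$ between interaction times are the jumps $\mp C|\sigma_i|$ of $C\int_{-\infty}^x\mu$ across $i$-shocks, divided by $a\approx 1$.

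The proof of \eqref{eqa2}, however, has a genuine gap: your key claim that the net change $\Delta_\mu$ of $\int_{-\infty}^x\mu$ across an interaction is \emph{always} quadratic, $|\Delta_\mu|\leq 2C_0|\sigma'\sigma''|(|\sigma'|+|\sigma''|)$, is false for a same-family shock--rarefaction interaction. Take an incoming $1$-shock of strength $s'$ overtaking an incoming $1$-rarefaction of strength $s''<s'$. The rarefaction contributes nothing to $\mu$, while the incoming shock contributes $-s'$; the outgoing $1$-shock has strength $\approx s'-s''$ and contributes $-(s'-s'')$, so $\Delta_\mu\approx s''+(\sigma_2)_+$, which is of \emph{linear} order $s''$ in the incoming strengths (similarly $\Delta_\mu\approx s'$ when the outgoing $1$-wave is a rarefaction). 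The quadratic Glimm decrease $\Delta L+\kappa\Delta Q\leq-\frac{\kappa}{2}s's''\lesssim \kappa\eps\, s''$ cannot absorb this linear term when $\eps$ is small, so your final display $a(t+,x)-a(t-,x)\leq C(-\tfrac{\kappa}{2}+4C_0\eps)|\sigma'\sigma''|$ does not follow. What saves the inequality in this case is not the quadratic decrease of the potential but the \emph{linear} decrease of $L$ itself coming from the shock--rarefaction cancellation, $\Delta L\approx -2\min(s',s'')+O(|\sigma'\sigma''|(|\sigma'|+|\sigma''|))$, which dominates $\Delta_\mu$. This is exactly the content of the paper's case (ii)-3, where $|(\sigma_1)_+-\sigma'|$ is bounded by $|\sigma''-(\sigma_1)_-|+|\sigma_1-\sigma'-\sigma''|\leq -(\Delta L+\kappa\Delta Q)$ using $|\sigma''-(\sigma_1)_-|\leq-\Delta L+C_0\eps|\sigma'||\sigma''|$ together with \eqref{c0}; the head-on case \eqref{299} and the pure shock--shock and rarefaction--rarefaction cases are the ones where your quadratic bookkeeping (and the paper's cases (i), (ii)-1, (ii)-2) does suffice. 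To repair your argument you must add this cancellation estimate; also note that for the simplified solver with two physical waves the correct observation is subadditivity of the positive part, $(\sigma'+\sigma'')_+\leq(\sigma')_++(\sigma'')_+$, which gives $\Delta_\mu\leq 0$ directly rather than a quadratic bound.
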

\begin{proof}
Note that $\|\mu(t)\|_{\mathcal{M}}\leq L(t)$, and
\[
L(0)+\kappa Q(0)=O(\varepsilon).
\]
Since $L(t)+\kappa Q(t)$ is decreasing,
it means that 
$$
|a(t,x)-1|=O(\varepsilon).
$$
So for $\eps$ small enough, $1/2<1/a(t,x(t)-)<2$.
Now:
$$
\frac{a(t,x_i(t)+)}{a(t,x(t)-)}-1=\frac{1}{a(t,x(t)-)}(a(t,x_i(t)+)-a(t,x(t)-))=\frac{1}{a(t,x(t)-)}C|\sigma_i| \alpha,
$$
with $\alpha=1$ if the shock $\sigma_i$ is a 2-shock, and $\alpha=-1$ if it is a 1-shock. This shows \eqref{sig1} and \eqref{sig2}.
\vskip0.3cm
Consider a time $t$ with a wave interaction. From the definition of the $a$ function,
\begin{equation}\label{eqa3}
\sup_\RR (a(t+,x)-a(t-,x))\leq C\left(\int_\RR |\mu(t+)-\mu(t-)|\,dx+(\Delta L(t)+\kappa \Delta Q (t))\right).
\end{equation}
Assume that the waves interacts at $x=x_0$. The interacting wave fronts are $\sigma'$ $\sigma''$ leading to outgoing waves $\sigma_1$, $\sigma_2$.
We study $\mu(t+)-\mu(t-)$  by considering separately all the possible kind of interactions.
\vskip0.3cm
 If the interaction involves a pseudoshock, then the strengths of the shocks are not affected, so
$\mu(t+)-\mu(t-)=0$, and since $\Delta L+\kappa \Delta Q\leq 0$, we have also  $a(t+,x)-a(t-,x)\leq 0$ for every $x\in \RR$. 
\vskip0.3cm
If the simplified solver is used, then  
$$
\mu(t+)-\mu(t-)=\delta_{\{x_0\}} [(\sigma'+\sigma'')_+-(\sigma')_+-(\sigma'')_+]\leq 0.
$$
We still have $a(t+,x)-a(t-,x)\leq 0$ in this case.
\vskip0.3cm

It remains to consider the cases involving the accurate solver. They correspond to the three cases of  Proposition \ref{prop1}.
\vskip0.1cm
\paragraph{\bf (i)} If $\sigma''$ is a 1-wave and $\sigma'$ is a 2-wave.
Using the definition of $\mu$ to justify the first equality below, the fact that $y\to(y)_+$ is Lipschitz with constant 1 for the  second inequality,  \eqref{299} for the third inequality, and Proposition \ref{prop:delta} for $\eps$ small enough to get the last inequality, we have:
\begin{eqnarray*}
&&|\mu(t+)-\mu(t-)|=\delta_{\{x_0\}}|(\sigma_2)_+-(\sigma_1)_+-((\sigma')_+-(\sigma'')_+)|\\
&&\qquad \leq \delta_{\{x_0\}}|(\sigma_2)_+-(\sigma')_+|+|(\sigma_1)_+-(\sigma'')_+|\\
&&\qquad \leq \delta_{\{x_0\}}(|\sigma_2-\sigma'|+|\sigma_1-\sigma''|)\\
&&\qquad \leq \delta_{\{x_0\}} C_0\,|\sigma'\sigma''|(|\sigma'|+|\sigma''|)\\
&&\qquad \leq  -(\Delta L(t)+\kappa \Delta Q (t))\delta_{\{x_0\}}.
\end{eqnarray*}
Remark also that we used \eqref{c0}. But using \eqref{eqa3} and the fact that $\Delta L(t)+\kappa \Delta Q (t)<0$, gives 
$$
\sup_\RR (a(t+,x)-a(t-,x))\leq 0.
$$

\vskip0.3cm
\paragraph{\bf (ii)} The cases \eqref{21} and \eqref{22} are similar. Let us do \eqref{21} in detail. Both $\sigma'$ and $\sigma''$ belong to the first family. 
In this case,
\begin{eqnarray*}
&&|\mu(t+)-\mu(t-)|=\delta_{\{x_0\}}|(\sigma_2)_+-(\sigma_1)_+-(-(\sigma')_+-(\sigma'')_+)]|\\
&&\qquad \leq \delta_{\{x_0\}}(|(\sigma_2)_+|+|(\sigma_1)_+-(\sigma')_+-(\sigma'')_+|)\\
&&\qquad \leq \delta_{\{x_0\}}(|(\sigma_2)|+|(\sigma_1)_+-(\sigma')_+-(\sigma'')_+|)\
\end{eqnarray*}
We need to separate cases depending on the nature of the incoming waves, and the kind of Riemann solver used.

\paragraph{\bf (ii)-1} If $\sigma'$ and $\sigma''$ are rarefactions, then $\sigma_1$ is also a rarefaction, and 
$$
|(\sigma_1)_+-(\sigma')_+-(\sigma'')_+|=0.
$$

\paragraph{\bf (ii)-2} If $\sigma'$ and $\sigma''$ are shocks, then $\sigma_1$ is also a shock, and 
$$
|(\sigma_1)_+-(\sigma')_+-(\sigma'')_+|=|\sigma_1-\sigma'-\sigma''|.
$$

\paragraph{\bf (ii)-3} Finally, if one of  $\sigma'$ and $\sigma''$ is a shock, let say $\sigma'$,  and the other a rarefaction, let say $\sigma''$. 
Then 
\begin{eqnarray*}
&&|(\sigma_1)_+-(\sigma')_+-(\sigma'')_+|=|(\sigma_1)_+-\sigma'|\leq |\sigma''-(\sigma_1)_-|+|\sigma_1-\sigma'-\sigma''|\\
&&\qquad \leq  -(\Delta L(t)+\kappa \Delta Q (t)),
\end{eqnarray*}
since $ |\sigma''-(\sigma_1)_-|\leq -\Delta L(t)+C_0\eps |\sigma'||\sigma''|$.

Gathering all the cases, we obtain:
$$
\sup_\RR (a(t+,x)-a(t-,x))\leq 0.
$$

\end{proof}


\section{Proof of Proposition \ref{main_prop}}\label{sec:Sam}


This section is devoted to the proof of Proposition \ref{main_prop}.
In our front-tracking procedure, we are stopping and restarting the clock every time there is a collision between waves (when the waves initiated from distinct Riemann problems). Weak solutions $u$ to \eqref{cl} naturally lies in $C^0(\RR^+; W^{-1,\infty}(\RR))$. Note that the  formulation of the entropy inequality \eqref{entropy_ineq_integral_form} holds with a boundary term for $t=0$, and this classically  implies that  $u$  is continuous in time at $t=0$ with values in $L^1_{\mathrm{loc}}(\RR)$. Because $L^1_{\mathrm{loc}}(\RR)$ is a strong topology, it implies that $\eta (u)$ is also continuous at $t=0$ in the same topology in $x$. 
However, because $\eta(u)$ verifies only Inequality \eqref{ineq:entropy},  $\eta(u)$ does not share this regularity in time for $t>0$. 
Therefore $\eta(u)$ is well defined only almost everywhere in time. 
 However, this technicality of stopping and restarting the clock at any time $t$ is not a real issue, and its resolution can be formalized with  the use of approximate limits as follows. For a reference on approximate limits, see \cite[p.~55-57]{MR3409135}.
 \begin{lemma}[A technical lemma on stopping and restarting the clock (from \protect{\cite[Lemma 2.5]{2017arXiv170905610K}})]
\label{ap_lim_lemma}
~\\
Let $u\in L^\infty (\mathbb{R}^+\times\mathbb{R})$ be a weak solution to \eqref{cl} with initial data $u^0$. Further, assume that $u$ is entropic for the entropy $\eta$, i.e. verifies \eqref{ineq:entropy} in the sense of distribution. Assume also that $u$ verifies the strong trace property (\Cref{defi_trace}). Then for all $v\in \Nuo$, and for all $c,d\in\mathbb{R}$ with $c<d$, the following approximate right- and left-hand limits 
\begin{align}
{\rm ap}\,\lim_{t\to {t_0}^{\pm}} \int\limits_c^d \eta(u(t,x)|v)\,dx
\end{align}
exist for all $t_0\in(0,\infty)$ and verify 
\begin{align}\label{ap_lim_ordering}
{\rm ap}\,\lim_{t\to {t_0}^{-}} \int\limits_c^d \eta(u(t,x)|v)\,dx\geq {\rm ap}\,\lim_{t\to {t_0}^{+}} \int\limits_c^d \eta(u(t,x)|v)\,dx.
\end{align}
Furthermore, the approximate right-hand limit exists at $t_0=0$ and verifies
\begin{align}\label{ap_lim_ordering0}
\int\limits_c^d \eta(u^0(x)|v)\,dx\geq {\rm ap}\,\lim_{t\to {t_0}^{+}} \int\limits_c^d \eta(u(t,x)|v)\,dx.
\end{align}
\end{lemma}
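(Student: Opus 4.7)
The plan is to show that $H(t):=\int_c^d \eta(u(t,x)|v)\,dx$ coincides almost everywhere with a monotone function plus a Lipschitz correction, and then to identify the one-sided approximate limits of $H$ with the (automatically existing) classical one-sided limits of this representative. I will work with $v\in\Nu$; the boundary case $v\in\Nuo\setminus\Nu$ follows by continuity of $\eta$ on $\Nuo$ together with boundedness of $u$.

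Since $u$ is a weak solution of \eqref{cl} and $v$ is constant, subtracting the linear combination $\nabla\eta(v)[u-v]$ from \eqref{entropy_ineq_integral_form} yields the relative form: for every nonnegative $\phi\in C_c^\infty([0,\infty)\times\RR)$,
\[
\int_0^\infty\!\!\int_\RR \bigl[\phi_t\,\eta(u|v)+\phi_x\,q(u;v)\bigr]\,dx\,dt + \int_\RR \phi(0,x)\,\eta(u^0(x)|v)\,dx \geq 0.
\]
Test against $\phi(t,x)=\theta(t)\chi_\epsilon(x)$, with $\theta\geq0$ smooth and $\chi_\epsilon$ a Lipschitz trapezoid equal to $1$ on $[c+\epsilon,d-\epsilon]$, supported in $[c,d]$, with slopes $\pm 1/\epsilon$ near the endpoints. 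The spatial contribution becomes
\[
\int_0^\infty \theta(t)\Bigl[\tfrac{1}{\epsilon}\!\int_c^{c+\epsilon}\!q(u(t,x);v)\,dx-\tfrac{1}{\epsilon}\!\int_{d-\epsilon}^d\!q(u(t,x);v)\,dx\Bigr]dt,
\]
and the strong trace property (\Cref{defi_trace}), applied to the vertical curves $X\equiv c$ and $X\equiv d$ together with uniform continuity of $q$ on the compact range of $u$, gives $L^1(0,T)$-convergence of this to $\int_0^\infty \theta(t)Q(t)\,dt$ with $Q(t):=q(u(t,c+);v)-q(u(t,d-);v)\in L^\infty(0,\infty)$. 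Dominated convergence in the remaining two terms yields, for every $\theta\in C_c^\infty([0,\infty))$, $\theta\geq 0$,
\[
\int_0^\infty \theta'(t)\,H(t)\,dt+\int_0^\infty \theta(t)\,Q(t)\,dt+\theta(0)\,H_0 \geq 0,\qquad H_0:=\int_c^d \eta(u^0(x)|v)\,dx.
\]

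Set $G(t):=\int_0^t Q(s)\,ds$, a Lipschitz function. For $\theta\in C_c^\infty((0,\infty))$, integration by parts gives $\int \theta'(t)[H(t)-G(t)]\,dt\geq 0$ for every such nonnegative $\theta$, so $(H-G)'$ is a nonpositive Radon measure on $(0,\infty)$. Hence $H-G$ agrees almost everywhere with a nonincreasing function $\tilde F$. Define $\tilde H:=\tilde F+G$; then $\tilde H=H$ a.e., and $\tilde H$ admits classical one-sided limits at every $t_0\in(0,\infty)$, with $\tilde H(t_0^-)\geq \tilde H(t_0^+)$ because $\tilde F$ is monotone and $G$ is continuous. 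Since approximate one-sided limits are insensitive to modification on null sets, and classical one-sided limits are in particular approximate ones, the existence assertion and the ordering \eqref{ap_lim_ordering} follow. For $t_0=0$, feed the inequality the test function $\theta_\delta$ equal to $1$ on $[0,\delta]$ and decreasing linearly to $0$ on $[\delta,2\delta]$; this produces
\[
-\frac{1}{\delta}\int_\delta^{2\delta} H(t)\,dt+O(\delta)+H_0 \geq 0,
\]
and letting $\delta\to 0^+$ gives $\tilde H(0^+)\leq H_0$, which is \eqref{ap_lim_ordering0}.

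The most delicate step will be the $\epsilon\to 0$ passage in the spatial boundary terms. The strong trace property furnishes $L^1(0,T)$-convergence of a \emph{supremum} in $y$ of $|u(t,X(t)+y)-u_+(t)|$, and one must verify that this dominates the \emph{averaged} evaluation $\tfrac{1}{\epsilon}\int_0^\epsilon |u(t,c+y)-u(t,c+)|\,dy$ and then transfer the convergence from $u$ to $q(u;v)$ via uniform continuity of $q$ on a compact set containing the essential image of $u$. Once that is in place, the remainder is the standard correspondence between distributional monotonicity and almost-everywhere monotone representatives, together with the elementary fact that the approximate one-sided limits of a function coincide with the classical one-sided limits of any a.e.-equivalent representative that possesses them.
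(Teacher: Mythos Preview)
Your argument is correct and is the natural one. The paper itself does not supply a proof of this lemma, deferring instead to \cite[Lemma~2.5]{2017arXiv170905610K}; what you have written is the standard route and is essentially what that reference contains: localize the relative entropy inequality \eqref{ineq:relative} to the strip $\{c<x<d\}$ via a trapezoidal cutoff, use the strong trace property along the vertical lines $X\equiv c$ and $X\equiv d$ to identify the boundary fluxes as a bounded function $Q(t)$, and conclude that $H$ agrees a.e.\ with a nonincreasing function plus a Lipschitz correction. The one-sided approximate limits and the ordering \eqref{ap_lim_ordering}, \eqref{ap_lim_ordering0} then drop out.

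One minor caveat: your remark that the case $v\in\Nuo\setminus\Nu$ ``follows by continuity of $\eta$ on $\Nuo$'' is not quite right as written, since $\eta(a|v)=\eta(a)-\eta(v)-\nabla\eta(v)(a-v)$ requires $\nabla\eta(v)$, and the paper only assumes $\eta\in C^3(\Nu)\cap C(\Nuo)$, so $\nabla\eta$ need not extend to $\partial\Nu$. This is arguably an imprecision in the statement of the lemma itself rather than a gap in your proof; in any case the lemma is only ever applied in Section~\ref{sec:Sam} with $v=\psi(t,x)$, which takes values in $\Nu$, so nothing downstream is affected.
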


The proof of \Cref{ap_lim_lemma} follows exactly the proof of \cite[Lemma 2.5]{2017arXiv170905610K}. For this reason, we do not include a proof here.
\vskip0.3cm
We gather in the following lemma useful simple properties of the relative quantities.
\begin{lemma}\label{lem:relative}
For any  $\mathcal{O}$ open subset of $\Nu$ with $\overline{\mathcal{O}}\subset \Nu$,  there exists a constant $C>0$ such that 
\begin{eqnarray*}
&&|q(a;b)|\leq C\eta(a|b),\qquad \forall (a,b)\in \Nuo\times \overline{\mathcal{O}},\\
&&|q(a;b_1)-q(a;b_2)|\leq C|b_1-b_2|, \qquad \forall (b_1,b_2)\in \overline{\mathcal{O}}^2, \  a\in \Nu,\\
&&|\eta(a|b_1)-\eta(a|b_2)|\leq C|b_1-b_2|, \qquad \forall (b_1,b_2)\in \overline{\mathcal{O}}^2, \  a\in \Nu.
\end{eqnarray*}
\end{lemma}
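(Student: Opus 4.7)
All three bounds reduce, via compactness and Taylor expansion, to the fact that $\eta$, $q$ and $f$ are smooth on the interior $\Nu$ with $\overline{\mathcal{O}}\Subset \Nu$, while being merely continuous up to the boundary of $\Nuo$. The plan is to split each estimate into a small-separation regime handled by Taylor, and a large-separation regime handled by compactness.

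For the first inequality, I would first use the entropy--entropy flux relation \eqref{eq:entropy} together with a Taylor expansion around $b\in \overline{\mathcal{O}}\subset \Nu$ to obtain, for any $a$ in a fixed neighborhood $U_b\subset \Nu$ of $b$,
\begin{align*}
q(a)-q(b)&=\eta'(b)f'(b)(a-b)+O(|a-b|^2),\\
f(a)-f(b)&=f'(b)(a-b)+O(|a-b|^2),
\end{align*}
so that $q(a;b)=q(a)-q(b)-\eta'(b)(f(a)-f(b))=O(|a-b|^2)$, with constant uniform in $b\in\overline{\mathcal{O}}$ thanks to $C^3$ bounds on $\eta$, $q$, $f$ over a slightly larger compact neighborhood of $\overline{\mathcal{O}}$. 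On the same neighborhood the strict convexity of $\eta$ combined with \Cref{l2_rel_entropy_lemma} gives $\eta(a|b)\geq c^*|a-b|^2$, which yields $|q(a;b)|\leq C\,\eta(a|b)$. For pairs $(a,b)\in \Nuo\times \overline{\mathcal{O}}$ with $|a-b|$ bounded below by some $r>0$, I would invoke compactness: $q$ is continuous on the compact set $\Nuo$ so $|q(a;b)|$ is bounded, while the strict convexity of $\eta$ at $b\in\overline{\mathcal{O}}$ forces $\eta(a|b)\geq c_r>0$ on this region (by continuity of $\eta$ on $\Nuo$ and compactness of the set $\{(a,b):b\in\overline{\mathcal{O}},\ a\in\Nuo,\ |a-b|\geq r\}$). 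Combining the two regimes gives the claimed inequality with a single constant.

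For the second and third inequalities, I would fix $a\in \Nu$ and differentiate in $b$. Writing
\begin{align*}
\partial_b \eta(a|b)&=-\eta''(b)(a-b),\\
\partial_b q(a;b)&=-q'(b)-\eta''(b)(f(a)-f(b))+\eta'(b)f'(b)=-\eta''(b)(f(a)-f(b)),
\end{align*}
where the last equality uses \eqref{eq:entropy} again, both derivatives are uniformly bounded for $b$ in a fixed compact neighborhood of $\overline{\mathcal{O}}$ inside $\Nu$ (since $\eta\in C^3(\Nu)$) and for $a$ in the bounded set $\Nu\subset\Nuo$ (since $f$ is continuous on the compact $\Nuo$, hence bounded). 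The mean value theorem along the segment joining $b_1$ to $b_2$ (which is contained in a suitable tubular neighborhood of $\overline{\mathcal{O}}$ inside $\Nu$, by taking $\overline{\mathcal{O}}$ convexified if necessary, or by applying the bound locally and summing along a polygonal path) then gives $|q(a;b_1)-q(a;b_2)|\leq C|b_1-b_2|$ and $|\eta(a|b_1)-\eta(a|b_2)|\leq C|b_1-b_2|$, with $C$ independent of $a\in\Nu$.

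The only genuine subtlety is the boundary behavior in the first estimate: one must be careful that the strict convexity quantifier in \Cref{l2_rel_entropy_lemma} (with $b$ in a fixed compact $V\subset\Nu$) extends to $a\in\Nuo$, not just $a\in\Nu$. This is why I separate into the two regimes above rather than trying to use a single quadratic lower bound uniformly on $\Nuo\times\overline{\mathcal{O}}$. For the second and third estimates, there is no such issue because we never need to differentiate at the boundary: the variable $b$ stays in $\overline{\mathcal{O}}\subset\Nu$ where $\eta$, $q$, $f$ are $C^3$.
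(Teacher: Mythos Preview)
Your proposal is correct and follows essentially the same approach as the paper. For the first inequality both you and the paper split into a near regime (Taylor expansion giving $|q(a;b)|\lesssim |a-b|^2$ combined with \Cref{l2_rel_entropy_lemma}) and a far regime (compactness giving boundedness of $q(\cdot;\cdot)$ and a positive lower bound on $\eta(\cdot|\cdot)$). For the second and third inequalities you differentiate in $b$ and apply the mean value theorem; the paper instead rewrites $q(a;b)=q(a)-h(b)-\eta'(b)f(a)$ with $h=q-\eta'f$, which amounts to the same computation since $h'=-\eta''f$ by \eqref{eq:entropy}, matching your formula $\partial_b q(a;b)=-\eta''(b)(f(a)-f(b))$ up to the $\eta''(b)f(b)$ term absorbed in $h'$. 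The convexity-of-path issue you flag is genuine but equally present (and equally easy to resolve) in the paper's version.
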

\begin{proof}
Consider an open set $\mathcal{O}'$ such that $\overline{\mathcal{O}}\subset \mathcal{O}'$ and $\overline{\mathcal{O}'}\subset \Nu$.
Since both $f,q\in C^0(\Nuo)$ and $f'\in C^0(\overline{\mathcal{O}})$, $q(\cdot;\cdot)$ is uniformly bounded on $\Nuo\times \overline{\mathcal{O}}$. Moreover, 
from  Lemma \ref{l2_rel_entropy_lemma}, $\eta(\cdot,\cdot)$ is bounded above and below uniformly on $(\Nuo\setminus \mathcal{O}')\times \overline{\mathcal{O}}$.
Therefore there exists a constant such that the first equality holds for those values.
But from  the definition in \eqref{def_q}, $q(b;b)=\partial_1q(b,b)=0$ for all $b\in \Nu$. So using Lemma \ref{l2_rel_entropy_lemma}, and the fact that $q''\in C^0(\overline{\mathcal{O}'})$ we have that there exists a constant $C$ such that 
$$
|q(a;b)|\leq C|a-b|^2\leq \frac{C}{c^*} \eta(a|b),\qquad for (a,b) \in \overline{\mathcal{O}'}\times \overline{\mathcal{O}}.
$$
This proves the first inequality of the lemma.
\vskip0.3cm
From the definition of $q(\cdot;\cdot)$ in  \eqref{def_q}, denoting $h=q-\eta'f\in C^1(\overline{\mathcal{O}})$, we have for every $(a,b_1,b_2)\in \Nuo\times \overline{\mathcal{O}}^2$:
$$
|q(a;b_1)-q(a;b_2)|=|h(b_2)-h(b_1)+[\eta'(b_2)-\eta'(b_1)]f(a)|\leq \sup_{\overline{\mathcal{O}}}(|h'|+|\eta''|) (1+\sup_{\Nuo}|f|)|b_2-b_1|.
$$
The proof of the last statement is similar.
\end{proof}

\vskip0.3cm
We now prove Proposition  \ref{main_prop}.
First we fix  the  value $v$ to be bigger than both $\hat{\lambda}$ and the constant $C$ of Lemma \ref{lem:relative}.
Take $0<\eps<1/2$ small enough such that Theorem \ref{theo_Bressan}, Proposition \ref{shift_existence_prop}, and Proposition \ref{prop:delta}  hold true.  
For any initial value $u^0$, and wild solution $u\in \Sweak$, we consider the family of solutions $\psi_\nu$ of the modified front tracking method. We want now to choose a particular one. Fix  $T,R>0$, and $p\in \NN$. First we insure that the initial value verifies 
$$
\|u^0-\psi_\nu(0,\cdot)\|_{L^2(-R,R)}\leq \frac{1}{p}.
$$
This fixes $N_\nu$.  Then we fix $\delta_\nu=1/(pT)$. Thanks to Lemma \ref{control_nonphys_932020}, we can choose $\eps_\nu$ such that 
$$
\sup_{r\in[0,T]}\sum_{i\in \mathcal{P}(r)} |\sigma_i|\leq \frac{1}{pT}.
$$
We denote by $\psi$ the associated solution to the modified front tracking method $\psi_\nu$. Especially, it verifies
\begin{eqnarray}\label{eqini}
&& \|u(0,\cdot)-\psi(0,\cdot)\|_{L^2(-R,R)}\leq \|u^0-u(0,\cdot)\|_{L^2(-R,R)}+\frac{1}{p},\\ \label{eqinit2}
&& T \delta_\nu \sup_{t\in [0,T]} L(t)\leq \frac{1}{p},\\
&&T\sup_{r\in[0,T]}\sum_{i\in \mathcal{P}(r)} |\sigma_i|\leq \frac{1}{p}.\label{eqinit3}
\end{eqnarray}
Proposition  \ref{prop:delta} provides three of the four properties of Proposition  \ref{main_prop}. It remains only to show the control in $L^2$ of $\psi(t,\cdot)-u(t,\cdot)$. Recall that as in \Cref{section:front}, for every time $r>0$, we denote by $\mathcal{P}(r)$ the set of $i$ corresponding to non-physical waves.
\vskip0.3cm

Consider two successive  interaction times $t_j<t_{j+1}$ of the front tracking solution $\psi$. 
Let the curves of discontinuity  between the two times $t_j<t_{j+1}$ be $h_1,\ldots,h_N$ for $N\in\mathbb{N}$ such that 
\begin{align}
h_1(t)< \cdots < h_N(t),
\end{align}
for all $t\in(t_j,t_{j+1})$. 
We only work on the cone of information, so we define for all times $t$
\begin{align}
h_0(t)=-R+vt, \\
h_{N+1}=R-vt.
\end{align}
Note that 
 there are no interactions between wave fronts in $\psi$ and the cone of information (coming from $h_0$ and $h_{N+1}$). For any $t\in [t_j,t_{j+1}]$,
note that on $ Q=\{(r,x): t_j<r<t, h_i(r)<x<h_{i+1}(r)\}$, the function $\psi(r,x)= b$ is constant. Moreover, by construction,  the weight function $a(r,x)$ is also constant on this set. Therefore,  integrating \eqref{ineq:relative} on $Q$, and using the strong trace property of Definition \ref{defi_trace}, we find
\begin{eqnarray*}
&&\qquad\qquad\qquad {\rm ap}\,\lim_{s\to {t}^{-}} \int\limits_{h_i(t)}^{h_{i+1}(t)} a(t-,x)\eta(u(s,x)|\psi(t,x))\,dx\\
&&\leq {\rm ap}\,\lim_{s\to {t_j}^{+}} \int\limits_{h_i(t_j)}^{h_{i+1}(t_j)} a(t_j+,x)\eta(u(s,x)|\psi(x,t_j))\,dx
+\int_{t_j}^{t} (F^+_{i} (r)-F^-_{i+1}(r))\,dr,
\end{eqnarray*}
where 
\begin{eqnarray*}
&&F^+_i (r)=a(r,h_i(r)+)\left[q(u(r,h_i(r)+);\psi(r,h_i(r)+))-\dot{h}_{i}(r)\eta(u(r,h_i(r)+)|\psi(r,h_i(r)+))\right],\\
&&F^-_i(r)=a(r,h_i(r)-)\left[q(u(r,h_i(r)-);\psi(r,h_i(r)-))-\dot{h}_{i}(r)\eta(u(r,h_i(r)-)|\psi(r,h_i(r)-))\right].
\end{eqnarray*}

We sum in $i$, and  combine the terms corresponding to $i$ into one sum, and the terms corresponding to ${i+1}$ into another sum, to find
\begin{eqnarray*}
&&\qquad\qquad\qquad {\rm ap}\,\lim_{s\to {t}^{-}} \int\limits_{-R+vt}^{R-vt} a(t-,x)\eta(u(s,x)|\psi(t,x))\,dx\\
&&\leq {\rm ap}\,\lim_{s\to {t_j}^{+}} \int\limits_{-R+vt_j}^{R-vt_j} a(t_j+,x)\eta(u(s,x)|\psi(t_j,x))\,dx
+\sum_{i=1}^{N}\int_{t_j}^{t} (F^+_{i} (r)-F^-_{i}(r))\,dr,
\end{eqnarray*}
where we have used that $F^+_0\leq 0$ and $F^-_{N+1}\geq0$ thanks to the first statement of Lemma \ref{lem:relative}, the definition of $v$, and the fact that 
$\dot{h}_0=-v=-\dot{h}_{N+1}$. 
\vskip0.3cm
We decompose the sum into three sums, one corresponding to the shock fronts, one for the rarefaction fronts, and one for the pseudoshocks. 
Thanks to Proposition \ref{shift_existence_prop} and Proposition \ref{prop:a}, for any $i$ corresponding to a shock front:
$$
F^+_{i} (r)-F^-_{i}(r)\leq0, \qquad \text{for almost every } t_j<r<t.
$$
Denote $\mathcal{R}$ the set of $i$ corresponding to approximated rarefaction fronts. 
Then for any $i\in \mathcal{R}$ by construction, $a(h_i(r)+,r)=a(h_i(r)-,r)$. And from 
Proposition \ref{dissipation_discrete_rarefaction}, and \eqref{eqinit2}:
\begin{eqnarray*}
\sum_{i\in\mathcal{R}}\int_{t_j}^t  (F^+_{i} (r)-F^-_{i}(r))\,dr\leq C\delta_\nu(t-t_j)\sum_{i\in\mathcal{R}}|\sigma_i|
\leq C\delta_\nu (t-t_j)L(t)  \leq \frac{C}{pT} (t-t_j).
\end{eqnarray*}
Consider now the case when $i\in \mathcal{P}(r)$. Recall that pseudoshocks travel with supersonic (greater-than-characteristic) speed $\hat{\lambda}$. Thus, we must have that for almost every time $r$: $u(r,h_i(r)+)=u(r,h_i(r)-)$. This is because if $u(r,h_i(r)+)\neq u(r,h_i(r)-)$, then the shock $\big(u(r,h_i(r)+),u(r,h_i(r)-),\hat{\lambda}\big)$ would be traveling with speed greater than any of the eigenvalues of $Df$, a contradiction. By construction of the $a$ function, we know that $a$ does not have a jump across pseudoshocks,  so we have also $a(r,h_i(r)+)=a(r,h_i(r)-)$. Therefore, thanks to the second and third estimates of Lemma \ref{lem:relative}, 
$$
F^+_{i} (r)-F^-_{i}(r)\leq C |\psi(r,h_i(r)+)-\psi(r,h_i(r)-)|=C|\sigma_i|.
$$




 Then, from \eqref{eqinit3} we receive
$$
\sum_{i\in\mathcal{P}(r)} \int_{t_j}^t (F^+_{i} (r)-F^-_{i}(r))\,dr \leq C(t-t_j)\sum_{i\in\mathcal{P}(t)}|\sigma_i|\leq \frac{C(t-t_j)}{pT}.
$$
Gathering all the families of waves, we find:
\begin{eqnarray*}
&&\qquad\qquad\qquad {\rm ap}\,\lim_{s\to {t}^{-}} \int\limits_{-R+vt}^{R-vt} a(t-,x)\eta(u(s,x)|\psi(t,x))\,dx\\
&&\leq {\rm ap}\,\lim_{s\to {t_j}^{+}} \int\limits_{-R+vt_j}^{R-vt_j} a(t_j+,x)\eta(u(s,x)|\psi(t_j,x))\,dx
+\frac{C(t-t_j)}{pT}. 
\end{eqnarray*}
\vskip0.3cm
Consider now any $0<t<T$, and denote $0<t_1<\cdot\cdot\cdot<t_J$ the times of wave interactions before $t$, $t_0=0$, and $t_{J+1}=t$. Using the convexity of $\eta$, 
 Lemma \ref{ap_lim_lemma}, and \eqref{eqa2}  we find:
\begin{eqnarray*}
&& \qquad \int\limits_{-R+vt}^{R-vt} a(t,x)\eta(u(t,x)|\psi(t,x))\,dx- \int\limits_{-R}^{R} a(0,x)\eta(u(0,x)|\psi(0,x))\,dx\\
&&\leq  {\rm ap}\,\lim_{s\to {t}^{+}}\int\limits_{-R+vt}^{R-vt} a(t,x)\eta(u(s,x)|\psi(t,x))\,dx- \int\limits_{-R}^{R} a(0,x)\eta(u(0,x)|\psi(0,x))\,dx\\
&&\leq \sum_{j=1}^{J+1} \left(  {\rm ap}\,\lim_{s\to {t_j}^{+}}\int\limits_{-R+vt_j}^{R-vt_j} a(t_j-,x)\eta(u(s,x)|\psi(t,x))\,dx\right.\\
&&\qquad\qquad\qquad \qquad\qquad\left.- {\rm ap}\,\lim_{s\to {t_{j-1}^+}}\int\limits_{-R+vt_{j-1}}^{R-vt_{j-1}} a(t_{j-1}-,x)\eta(u(s,x)|\psi(t,x))\,dx\right)\\
&&\leq \sum_{j=1}^{J+1} \left(  {\rm ap}\,\lim_{s\to {t_j}^{-}}\int\limits_{-R+vt_j}^{R-vt_j} a(t_j-,x)\eta(u(s,x)|\psi(t,x))\,dx\right.\\
&&\qquad\qquad\qquad \qquad\qquad\left.- {\rm ap}\,\lim_{s\to {t_{j-1}}^{+}}\int\limits_{-R+vt_{j-1}}^{R-vt_{j-1}} a(t_{j-1}-,x)\eta(u(s,x)|\psi(t,x))\,dx\right)\\
&&\leq \sum_{j=1}^{J+1} \left(  {\rm ap}\,\lim_{s\to {t_j}^{-}}\int\limits_{-R+vt_j}^{R-vt_j} a(t_j-,x)\eta(u(s,x)|\psi(t,x))\,dx\right.\\
&&\qquad\qquad\qquad \qquad\qquad\left.- {\rm ap}\,\lim_{s\to {t_{j-1}}^{+}}\int\limits_{-R+vt_{j-1}}^{R-vt_{j-1}} a(t_{j-1}+,x)\eta(u(s,x)|\psi(t,x))\,dx\right)\\
&&\leq  \sum_{j=1}^{J+1} C\frac{(t_j-t_{j-1})}{Tp}\leq C\frac{t}{Tp}\leq \frac{C}{p}.
\end{eqnarray*}
Using that $|a-1|<1/2$  and  \eqref{eqini}, we get that for every $0<t<T$
$$
 \int\limits_{-R+vt}^{R-vt}\eta(u(t,x)|\psi(t,x))\,dx\leq  2\|u^0-u(0,\cdot)\|^2_{L^2(-R,R)}+\frac{C}{p}.
$$
Choosing $p$ big enough such that $C/p<1/m$ gives the result.

%
%

\appendix
\section{Proof of Lemma \ref{lem:limit}}

For any $R>0$, Consider $B_{-1}= L^1(-R,R)$, $B_0=L^2(-R,R)$, $B_1=BV(-R,R)$. These Banach Spaces are nested into each other,
and $B_1$ is injected in $B_0$ compactly. Moreover, the $\psi_n$ are uniformly bounded in $L^\infty(0,R;B_1)$ and, from \eqref{trucmuche}, $\{\partial_t\psi_n\}_{n\in\NN}$ is uniformly bounded in $L^\infty(0,R;B_{-1})$. Therefore, thanks to the Aubin Lions lemma, 
$\{\psi_n\}_{n\in\NN}$ is precompact in $C^0(0,R;L^2(-R,R))$. Therefore there exists a subsequence, still denoted $\{\psi_n\}$, and $\psi\in L^\infty(\RR^+\times \RR)$ such that  $\psi_n$ converges to $\psi$ in $C^0(0,R;L^2(-R,R))$ for all $R>0$. Obviously, the convergence holds also in $L^2((0,R)\times(-R,R))$.  Then, by a diagonal argument, we can re-extract a subsequence such that the convergence holds for almost every $(t,x)\in \RR^+\times\RR$. The function $\psi$ still verifies $\|\psi\|_{L^\infty(\R^+;BV(\RR))}\leq C$.  Thus, we can choose a representative of $\psi$ that is right-continuous (in $x$).

\vskip0.3cm
It remains to show that $\psi$ verifies the Bounded Variation Condition of Definition \ref{tame_def}  for $\delta\coloneqq \frac{1}{\hat{\lambda}}$.
Let $\gamma$, and $a',b'$ be a space-like curve as in Definition \ref{def:space}. We will now show that the function $x\mapsto \psi(\gamma(x),x)$ has bounded variation.  Let any points $P_i=(\gamma(x_i),x_i)$ be given, with $x_0<x_1<\cdots<x_m$. At each time $t_i=\gamma(x_i)$, we know that $\psi_n(t_i,\cdot)$ will converge to $\psi(t_i,\cdot)$ almost everywhere. Due to $\psi(t_i,\cdot)$ being right-continuous for each $i$, for any $\epsilon>0$ we can find $x_i'\in[x_i,x_i+\epsilon]$ such that
\begin{align}
\abs{\psi(t_i,x_i')-\psi(t_i,x_i)}< \epsilon,& \hspace{.4in} \lim_{n\to\infty} \psi_n(t_i,x_i')=\psi(t_i,x_i'),\label{first_line_bv_cond_proof}\\
\hat{\lambda}\abs{t_i-t_{i-1}}<x_i'-x_{i-1}',& \hspace{.4in} \hat{\lambda}t_m<b-x_m,  \hspace{.4in}  \hat{\lambda}t_0<x_0-a,\label{second_line_bv_cond_proof}
\end{align}
for constants $a$ and $b$ to be chosen momentarily.

Define $\gamma'$ to be the polygonal line with vertices at the points $P_i'\coloneqq (t_i,x_i')$ and also define $\bar{\gamma}$ to be the constant map $\bar{\gamma}(x)=0$ for all $x\in(a,b)$, where $a$ and $b$ are from \eqref{second_line_bv_cond_proof}.

Then, by choosing $a$ and $b$ such that $a'-a>0$ is suitably large and $b-b'>0$ is suitably large, the properties \eqref{first_line_bv_cond_proof} and \eqref{second_line_bv_cond_proof} imply that $\bar{\gamma}$ dominates $\gamma'$ (as in \Cref{domination_def}). 

Since $\psi_n$ verifies uniformly the property of Condition \ref{H},  from \eqref{truc}, we have
\begin{align}
\sum_{i=1}^m\abs{\psi(t_i,x_i)-\psi(t_{i-1},x_{i-1})}&\leq 2m\epsilon +\sum_{i=1}^m\abs{\psi(t_i,x_i')-\psi(t_{i-1},x_{i-1}')}\\
&\leq 2m\epsilon +\limsup_{n\to\infty}(\mbox{Tot. Var.}\{\psi_n;\gamma'\})\\
&\leq 2m\epsilon +C\limsup_{n\to\infty}(\mbox{Tot. Var.}\{\psi_n(0,\cdot);(a,b)\}).
\end{align}
Recall then that the total variation of $\psi_n(0,\cdot)$ on the interval $(a,b)$ is uniformly bounded in $n$. Then, since $\epsilon>0$ and the points $P_i$ were arbitrary, we conclude that $\psi$ verifies the Bounded Variation Condition (\Cref{tame_def}).

\bibliographystyle{apalike}
\bibliography{references09-31}
\end{document}